\documentclass[11pt, a4paper,reqno]{amsart}

\usepackage{amssymb}

\usepackage{enumitem}
\setlist[enumerate,1]{label=\rm(\arabic*)}
\setlist[enumerate,2]{label=\rm(\alph*)}
\setlist[enumerate,3]{label=\rm(\roman*)}

\usepackage[PS]{diagrams}
\usepackage{xcolor}
\usepackage{pgf}

\newcommand{\iso}{\cong}
\newcommand{\ie}{\textit{i.e.}}

\newcommand{\ignore}[1]{\relax}
\newcommand{\bN}{\mathbb{N}}
\newcommand{\bZ}{\mathbb{Z}}

\newcommand{\bC}{\mathbb{C}}
\newcommand{\nbd}{\nobreakdash}

\newcommand{\tensor}{\otimes}
\newcommand{\ann}{\ensuremath{\mathrm{ann}}}

\numberwithin{equation}{section}

\newtheorem{theorem}[equation]{Theorem}
\newtheorem{corollary}[equation]{Corollary}
\newtheorem{proposition}[equation]{Proposition}
\newtheorem{lemma}[equation]{Lemma}

\theoremstyle{definition}
\newtheorem{definition}[equation]{Definition}
\newtheorem{remark}[equation]{Remark}

\newcommand{\cF}{\ensuremath{\mathcal{F}}}
\newcommand{\cK}{\ensuremath{\mathcal{K}}}

\let\ideal=\unlhd

\usepackage[hypertexnames=false,colorlinks=false,pdfborderstyle={/S/U/W 1}]{hyperref}
\usepackage{bookmark}

\newcommand{\M}{\mathcal{M}}
\newcommand{\supp}{\mathrm{supp}}
\newcommand{\rank}{\mathrm{rank}}
\newcommand{\mcrank}{\mathrm{rank}_{\text{\textsc{McCoy}}}}
\newcommand{\inv}{^{-1}}

\begin{document}

\title[Algebraic jump loci over Laurent polynomial rings]{Algebraic jump loci
  for rank and Betti numbers over Laurent polynomial rings}

\date{November 27, 2018}

\author{Thomas H\"uttemann}

\address{Thomas H\"uttemann\\ Mathematical Sciences Research Centre\\
  School of
  Mathematics and Physics\\ Queen's University Belfast\\
  Belfast BT7~1NN\\ Northern Ireland, UK}

\email{t.huettemann@qub.ac.uk}

\author{Zuhong Zhang}

\address{Zhang Zuhong\\ School of Mathematics\\ Beijing Institute Of
  Technology\\ \break 5 South Zhongguancun Street, Haidian District\\
  100081 Beijing\\ P.~R.~China}

\email{zuhong@gmail.com}

\thanks{Work on this paper commenced during a research visit of the
  first author to Beijing Institute of Technology in
  January~2017. Their hospitality and financial support are gratefully
  acknowledged.}

\subjclass[2010]{13B25 (Primary); 13C99 (Secondary)}

\begin{abstract}
  Let $C$ be a chain complex of finitely generated free modules over a
  commutative \textsc{Laurent} polynomial ring~$L_{s}$ in $s$
  indeterminates. Given a group homomorphism
  $p \colon \bZ^{s} \rTo \bZ^{t}$ we let
  $p_{!}(C) = C \tensor_{L_{s}} L_{t}$ denote the resulting induced
  complex over the \textsc{Laurent} polynomial ring~$L_{t}$ in $t$
  indeterminates. We prove that the \textsc{Betti} number jump loci,
  that is, the sets of those homomorphisms $p$ such that
  $b_{k} \big( p_{!} (C) \big) > b_{k} (C)$, have a surprisingly
  simple structure. We allow non-unital commutative rings of
  coefficients, and work with a notion of \textsc{Betti} numbers that
  generalises both the usual one for integral domains, and the
  analogous concept involving \textsc{McCoy} ranks in case of
  unital commutative rings.
\end{abstract}

\keywords{Jump loci, McCoy rank of matrices, Betti number, Laurent polynomial ring}

\maketitle

\section*{Introduction}

In this paper we examine the behaviour of ranks of matrices and
\textsc{Betti} numbers of chain complexes over \textsc{Laurent}
polynomial rings in several indeterminates under a linear change of
variables. Here ``rank'' and ``\textsc{Betti} number'' are taken
relative to a prescribed family of ideals in the ground ring; in
spirit, this is close to (and generalises) considering the
\textsc{McCoy} rank of a matrix over a commutative ring rather than
the usual rank in linear algebra. The resulting purely algebraic
``jump loci'' have a particularly simple structure, which is a
surprising feature of the theory; we start with a digression to
demonstrate that, in general, jump loci can be almost arbitrarily
complicated. We then sketch the motivating results of \textsc{Kohno}
and \textsc{Pajitnov}, before we finally start discussing the specific
set-up under consideration.

\subsection*{General jump loci are complicated}

Jump loci are subsets of a moduli space~$M$ encoding precisely which
of the objects parametrised by~$M$ have a certain desirable property,
and which do not. For example, the space of rank one local systems on
a finite $CW$-complex $X$ may be identified with the algebraic variety
\begin{displaymath}
  M = \hom \big( \pi_1(X), \bC^{\times} \big) = \big( \bC^\times
  \big)^{b_1(X)} \times F
\end{displaymath}
with $F$ a discrete finite abelian group. For $k > 0$ let
$\Sigma^k(X)$ be the subset of~$M$ corresponding to those rank one
systems~$\rho$ satisfying $\dim H^k (X, \rho) \geq 1$; this is an
example of (cohomological) jump loci. No general classification of
such jump loci can be given; in fact, for any algebraic subvariety~$Z$
of the algebraic torus $\big(\bC^\times\big){}^n$ and any $k \geq 1$
there is a finite $CW$ complex~$X$ with $M = \big(\bC^\times\big){}^n$
and $\Sigma^k(X) = Z \cup \{\mathbf{1}\}$, see \textsc{Suciu},
\textsc{Yang} and \textsc{Zhao} \cite[Lemma~10.3]{MR3379489},
\textsc{Simpson}~\cite{MR1492538} and
\textsc{Wang}~\cite[Theorem~1.1]{2013arXiv1304.0239W}.

\textsc{Kohno} and \textsc{Pajitnov} considered, in a similar spirit,
the twisted \textsc{Novikov} homology of a finite $CW$ complex. In
contrast to the case of~$\Sigma^k$ described above, the ensuing
(homological) jump loci have a surprisingly simple structure: they are
finite unions of linear subspaces of~$\bC^n$. This striking result
rests on an analysis of jump loci of a purely algebraic nature, which
we hasten to describe in some detail now.

\subsection*{The Kohno-Pajitnov algebraic jump loci}

Let $R$ be a commutative integral domain with unit. A group
homomorphism $p \colon \bZ^{s} \rTo \bZ^{t}$ determines a homomorphism
$p_{*} \colon R[\bZ^{s}] \rTo R[\bZ^{t}]$ of group rings. Given a
bounded chain complex $C$ of finitely generated free
$R[\bZ^{s}]$-modules we obtain the induced chain complex
$p_{!}(C) = C \tensor_{R[\bZ^{s}]} R[\bZ^{t}]$ of finitely generated
free $R[\bZ^{t}]$-modules. Over a group ring $R[\bZ^{n}]$ we have a
meaningful notion of ``rank'' for matrices and free modules, and can
thus define the \textsc{Betti} numbers $b_{k} (C)$ and
$b_{k} \big(p_{!}(C)\big)$.  \textsc{Kohno} and \textsc{Pajitnov}
proved the following result characterising the jump loci of the
\textsc{Betti} numbers with respect to varying the group
homomorphism~$p$:

\begin{theorem}[{\textsc{Kohno} and \textsc{Pajitnov} \cite[Theorem~7.3]{orig}}]
  \label{thm:KP}
  Let $k \in \bZ$ and $q \geq 0$ be given. There exists a finite
  family of proper direct summands $G_i \subset \hom(\bZ^{s}, \bZ)$
  such that the inequality $b_{k} (C) + q < b_{k} \big(p_{!}(C)\big)$
  holds
  if and only if there is an index~$i$ with $p \in G_{i}^{t}
  \subseteq \hom(\bZ^{s}, \bZ)^{t} = \hom (\bZ^{s}, \bZ^{t})$.
\end{theorem}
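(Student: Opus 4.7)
The plan is to reduce the question about \textsc{Betti} numbers to one about ranks of matrices, and then to the \cK-set machinery developed earlier in the paper. Since $R$ (hence $R[\bZ^{s}]$ and $R[\bZ^{t}]$) is an integral domain and each $C_{k}$ is finitely generated free,
\[
b_{k}(C) = \rank C_{k} - \rank(d_{k}) - \rank(d_{k+1}),
\]
and analogously after applying~$p_{*}$. Writing $r_{k} = \rank(d_{k})$ and $r_{k+1} = \rank(d_{k+1})$, the inequality $b_{k}(C) + q < b_{k}\bigl(p_{*}(C)\bigr)$ is equivalent to the existence of integers $a, b \geq 0$ with $a + b = q + 1$, $a \leq r_{k}$ and $b \leq r_{k+1}$, such that $\rank\bigl(p_{*}(d_{k})\bigr) \leq r_{k} - a$ \emph{and} $\rank\bigl(p_{*}(d_{k+1})\bigr) \leq r_{k+1} - b$. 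Only finitely many such pairs $(a, b)$ occur.

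For each such pair, let $X_{a,b} \subset R[\bZ^{s}]$ be the finite set consisting of all $(r_{k} - a + 1) \times (r_{k} - a + 1)$ minors of~$d_{k}$ together with all $(r_{k+1} - b + 1) \times (r_{k+1} - b + 1)$ minors of~$d_{k+1}$. Because $R[\bZ^{t}]$ is a domain, the rank of a matrix over it drops below a prescribed threshold precisely when all appropriately sized minors vanish; hence the conjunction of the two rank conditions above is equivalent to the single assertion $p_{*}(X_{a,b}) \subseteq \{0\}$. Setting $\cK = \{(0)\}$, this says exactly that $p_{*}(X_{a,b})$ is a \cK-set. Moreover, $X_{a,b}$ itself is \emph{not} a \cK-set: since $a + b = q + 1 \geq 1$ at least one of $a, b$ is positive, forcing a non-vanishing minor of $d_{k}$ or $d_{k+1}$ to lie in $X_{a,b}$.

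At this point the principal structural result of the paper on \cK-sets is invoked: for any finite $X \subset R[\bZ^{s}]$ which is not a \cK-set, the locus of~$p$ for which $p_{*}(X)$ becomes a \cK-set is a finite union of subsets of the form $G_{i}^{t}$ with $G_{i} \subset \hom(\bZ^{s}, \bZ)$ a proper direct summand. Applying this to each~$X_{a,b}$ and taking the union over the finitely many relevant pairs yields the finite list of proper direct summands asserted by the theorem.

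The real work is not in this reduction, which is essentially bookkeeping, but in establishing the underlying structural result about \cK-sets; the combinatorial analysis of supports of \textsc{Laurent} polynomials and the way homomorphisms of lattices force cancellations is where the specific appearance of direct summands (rather than arbitrary subgroups) of $\hom(\bZ^{s}, \bZ)$ must be extracted. A secondary, minor technical point is to verify that the case of \textsc{McCoy} rank --- which for non-domain coefficient rings diverges from the naive notion --- is properly subsumed, but over the integral domain assumption in force here this collapses to the preceding analysis.
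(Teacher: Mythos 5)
Your proposal is correct and follows essentially the same route the paper takes: reduce the Betti-number inequality to finitely many paired rank-drop conditions indexed by $(a,b)$ with $a+b = q+1$, recast each pair as the assertion that $p_{*}$ sends a finite set of minors to a $\cK_{0}$-set, and invoke the partition-subgroup jump-locus result for modules. Your observation that each admissible $X_{a,b}$ contains a non-vanishing minor, hence is not itself a $\cK_{0}$-set, is exactly what the paper uses (via its lemma on $\cK$-partitions) to conclude that the resulting partition subgroups are \emph{proper} direct summands of $\bZ^{s*}$.
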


In the language used before, the
``moduli space'' in question is $M = \hom (\bZ^{s}, \bZ^{t})$. ---
The first step of the proof is to characterise those~$p$
which satisfy $p_{*}(\Delta) = 0$, for
a fixed non-zero $\Delta \in R[\bZ^{s}]$. Next one establishes a
variant of the theorem for jump loci of the rank of matrices,
characterising the condition $\rank \big( p_{*}(A) \big) < \rank (A)$
for a fixed matrix~$A$ with entries in~$R[\bZ^{s}]$. Finally the
actual theorem can be verified by considering the ranks of the
differentials in the chain complex.

\subsection*{Jump loci from families of ideals}

The \textsc{Kohno}-\textsc{Pajitnov} jump loci described above result
from considering vanishing conditions of the type $p_{*}(\Delta) = 0$.
In the present paper, we take the jump to more general conditions,
showing that a much larger class of jump loci has structure as
described in Theorem~\ref{thm:KP}. To this end, let $\cK$ be a
non-empty set of ideals of~$R$ such that\footnote{Such a set~$\cK$ is
  usually called an order ideal in the partially ordered set of ideals
  of~$R$, but this terminology seems less than ideal in the present
  context.} %
if $I \in \cK$ and $J$ is an ideal contained in~$I$, then $J \in \cK$.
With respect to~$\cK$ and~$\Delta$, we formulate the following
condition on~$p$: {\it The ideal of~$R$ generated by the coefficients
  of the element $p_{*}(\Delta) \in R[\bZ^{t}]$ lies in~$\cK$.} This
should be thought of as saying that $p_{*}(\Delta)$ satisfies a
certain property encoded by~\cK. If \cK{} contains a unique
inclusion-maximal ideal~$M$, the property in question is that of being
zero in~$R/M$.

With respect to the ``property''~\cK{} we establish a notion of
\cK-rank of matrices and \cK-\textsc{Betti} numbers for chain
complexes of finitely generated free modules, and characterise their
jump loci. We allow $R$ to be an arbitrary commutative ring which may
even be non-unital. It is surprising that the aforementioned structure
theory can be established in this generality. The price to pay is that
one cannot expect to have {\it proper\/} direct summands~$G_i$ any
more.

Among the many possible sets~\cK{} two deserve special mention. We
assume a unital commutative ring~$R$ for now. If we take $\cK$ to be
the set of all ideals having non-trivial annihilator then the
$\cK$-rank of a matrix (Definition~\ref{def:X-rank}) is precisely the
\textsc{McCoy}-rank of a matrix as introduced in
\cite[\S2]{MR0006150}. If $R$ is an integral domain and $\cK$ consists
of the zero ideal only, we recover the original result of
\textsc{Kohno} and \textsc{Pajitnov} \cite{orig}.

\subsection*{Notation and conventions}

Throughout the paper, $R$ denotes a fixed commutative ring, possibly
non-unital. The notation $I \ideal R$ is used to indicate that $I$ is
an ideal of~$R$, possibly $\{0\}$ or~$R$ itself. We will concern
ourselves with the group rings $R[\bZ^{s}]$ and $R[\bZ^{t}]$; their
elements are written in the form $\sum_{a \in \bZ^{s}} r_{a} x^{a}$
and $\sum_{b \in \bZ^{t}} \rho_{b} y^{b}$, respectively, with almost
all of the coefficients $r_{a}$ and $\rho_{b}$ being zero.  We let
$\bZ^{s*} = \hom (\bZ^{s}, \bZ)$ stand for the $\bZ$-dual
of~$\bZ^{s}$.  The group $\hom(\bZ^{s}, \bZ^{t})$ is often identified
with $\big( \bZ^{s*}\big){}^{t}$. Any homomorphism
$p \in \hom(\bZ^{s}, \bZ^{t})$ induces an $R$-algebra homomorphism
$p_{*} \colon R[\bZ^{s}] \rTo R[\bZ^{t}]$, which maps
$\Delta = \sum_{a \in \bZ^{s}} r_{a} x^{a} \in R[\bZ^{s}]$ to
\begin{equation}
  \label{eq:p_star_Delta}
  p_{*} (\Delta) = \sum_{b \in \bZ^{t}} \Big( \sum_{a \in p\inv(b)}
  r_{a} \Big) \cdot y^{b} \quad \in R[\bZ^{t}] \ .
\end{equation}
The map $p \in \hom(\bZ^{s}, \bZ^{t})$ also induces a map of sets
of matrices
\begin{displaymath}
  p_{*} \colon \M_{m,n}(R[\bZ^{s}]) \rTo \M_{m,n} (R[\bZ^{t}])
\end{displaymath}
by applying the ring homomorphism~$p_{*}$ to each matrix element.

\section{Properties of ideals and modules}
\label{sec:basic}

Suppose that $\cK$ is a set of ideals of~$R$. This set encodes a
``property'' that elements of the group ring $R[\bZ^{n}]$
may or may not posses. The simplest case is that $\cK$ consists of the
zero ideal only, in which case the property in question is
``being~$0$''.

\begin{definition}
  \label{def:prop_ideals}
  \begin{enumerate}
  \item An ideal $I$ of~$R$ is called a {\it \cK-ideal\/} if
    $I \in \cK$.
  \item A subset $X \subseteq R$ is called a {\it \cK-set\/} if the
    ideal $\langle X \rangle$ generated by~$X$ is a \cK-ideal.
  \end{enumerate}
\end{definition}  

\goodbreak

\begin{definition}
  \label{def:prop_modules}
  Let $X$ be subset of the group ring $R[\bZ^{n}]$.
  \begin{enumerate}
  \item We let $iX$ denote the ideal generated by the set of
    coefficients of the elements of~$X$.
  \item We say that $X$ is a {\it $\cK$-set\/} provided that $iX$ is a
    $\cK$-ideal, \ie, provided that the set of coefficients of
    elements of~$X$ is a $\cK$-set.
  \item An $R$-submodule~$M$ of~$R[\bZ^{n}]$ is called a {\it
      $\cK$-module\/} provided it is a $\cK$-set.
  \end{enumerate}
\end{definition}

In general, we will only be interested in sets~$\cK$ which are closed
under taking smaller ideals:

\begin{definition}
  \label{def:hereditary}
  We call $\cK$ {\it hereditary\/} if $\cK$ is non-empty, and if
  $I \in \cK$ and $J \subseteq I$ together imply $J \in \cK$, for any
  $J \ideal R$.
\end{definition}

As mentioned before, two relevant hereditary sets are
\begin{displaymath}
  \cK_{0} = \big\{ \{0\} \big\} \quad \text{and} \quad
  \cK_{1} = \big\{ I \ideal R \,\big|\, \ann_R(I) \neq
  \{0\} \big\} \ ;
\end{displaymath}
the former corresponds to vanishing conditions, the latter relates to
the \textsc{McCoy}-rank of matrices as explained in
\S\ref{sec:jump-loci-matrices} below. If $V$ is a fixed injective
$R$-module we have the hereditary set
\begin{displaymath}
  \cK_{E,V} = \big\{ I \ideal R \,\big|\, E(I) \text{ embeds into~\(V\)}
  \big\} \ ,
\end{displaymath}
where $E(I)$ denotes an injective hull of the $R$-module~$I$; more
generally, if $\mathcal{V}$ is a family of injective modules, we
obtain a hereditary set
\begin{displaymath}
  \cK_{E,\mathcal{V}} = \big\{ I \ideal R \,\big|\, E(I) \text{ embeds
    into~\(V\) for some \(V \in \mathcal{V}\)} \big\} \ .
\end{displaymath}
Any union of hereditary sets is hereditary, and in fact
$\cK_{E,\mathcal{V}} = \bigcup_{V \in \mathcal{V}} \cK_{E, V}$.  Any
subset $X \subseteq R$ containing~$0$ gives rise to the hereditary set
$\cK_{\subseteq X} = \big\{ I \ideal R \,|\, I \subseteq X \big\}$,
and, if $X$ contains at least one element other than~$0$, the
hereditary set
$\cK_{\subsetneqq X} = \big\{ I \ideal R \,|\, I \subsetneqq X
\big\}$.
In particular, we can consider the hereditary set
$\cK_{\subsetneq J(R)}$ in case the \textsc{Jacobson} radical~$J(R)$
of~$R$ is non-trivial. For a unital ring~$R$, this is the hereditary
set of all superfluous ideals different from~$J(R)$, where $I$ is {\it
  superfluous\/} if $I + J = R$ implies $J = R$, for all $J \ideal R$.

Before we construct more examples of hereditary sets, we prove a basic
result relating the property of ``being a \cK-set'' for a set
$X \subseteq R[\bZ^{s}]$ and its image~$p_{*}(X)$, for
$p \in \hom(\bZ^{s}, \bZ^{t})$. Those~$p$ where
$p_{*}(X)$ is a \cK-set form the ``jump loci'' from the title of the
paper.

\begin{lemma}
  \label{lem:X_through_hom}
  Let \cK{} be a hereditary set of ideals of~$R$, and let
  $X \subseteq R[\bZ^{s}]$ be a subset. For any
  $p \in \hom(\bZ^{s}, \bZ^{t})$, if $X$ is a $\cK$-set then so
  is~$p_{*}(X)$.
\end{lemma}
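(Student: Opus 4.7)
The plan is to show directly the inclusion $i\bigl(p_*(X)\bigr) \subseteq iX$ of ideals of~$R$; once this is in hand, the hereditariness of~$\cK$ gives the conclusion immediately.

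First, I would unpack the formula~\eqref{eq:p_star_Delta}. Given any element $\Delta = \sum_{a \in \bZ^{s}} r_{a} x^{a}$ of~$X$, the coefficient of~$y^{b}$ in $p_{*}(\Delta)$ is the finite sum $\sum_{a \in p\inv(b)} r_{a}$. In particular, every coefficient appearing in any element of~$p_{*}(X)$ is an $R$-linear combination (in fact a plain integer combination, even just a sum) of coefficients of elements of~$X$, hence lies in the ideal~$iX$ generated by those coefficients.

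Consequently the set of coefficients of elements of~$p_{*}(X)$ is contained in~$iX$, so the ideal it generates, namely $i\bigl(p_{*}(X)\bigr)$, satisfies
\begin{displaymath}
  i\bigl(p_{*}(X)\bigr) \subseteq iX \ .
\end{displaymath}
By hypothesis $X$ is a $\cK$-set, so $iX \in \cK$. Since $\cK$ is hereditary and $i\bigl(p_{*}(X)\bigr)$ is an ideal of~$R$ contained in~$iX$, Definition~\ref{def:hereditary} gives $i\bigl(p_{*}(X)\bigr) \in \cK$, which is exactly the statement that $p_{*}(X)$ is a $\cK$-set.

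There is no real obstacle here: the proof is a two-line consequence of the explicit description of~$p_{*}$ together with the closure of~$\cK$ under passing to smaller ideals. The only point to be careful about is not to confuse the ideal generated by coefficients with the set of coefficients itself, since one needs the containment at the level of ideals (not merely sets) to invoke hereditariness.
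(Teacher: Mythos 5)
Your proof is correct and is essentially identical to the paper's own argument: both observe from formula~\eqref{eq:p_star_Delta} that the coefficients of $p_{*}(\Delta)$ are sums of coefficients of $\Delta$, deduce the ideal inclusion $i\bigl(p_{*}(X)\bigr) \subseteq iX$, and then invoke hereditariness of~$\cK$.
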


\begin{proof}
  Let $I$ be the ideal generated by the coefficients of elements
  of~$X$; we have $I \in \cK$ by hypothesis on~$X$. Let $J$ be the
  ideal generated by the coefficients of elements of~$p_{*}(X)$. The
  elements of $p_{*}(X)$ are of the form $p_{*}(\Delta)$, for
  $\Delta \in M$, and by formula~(\ref{eq:p_star_Delta}) the
  coefficients of~$p_{*}(\Delta)$ are sums of coefficients
  of~$\Delta$. But the latter coefficients are elements of~$I$, hence
  so are the former. That is, the generators of~$J$ are elements
  of~$I$ whence $J \subseteq I$. As $\cK$ is hereditary we conclude
  $J \in \cK$ so that $p_{*}(X)$ is a $\cK$-set as claimed.
\end{proof}

\goodbreak

As our set~$\cK$ always contains the zero ideal, we also have the
following:

\begin{lemma}
  \label{lem:augmentation}
  If $X$ is a subset of the augmentation ideal $I_{s} = \ker 0_*$
  of~$\bZ^{s}$, then $0_{*}(X)$ is a \cK-set. \qed
\end{lemma}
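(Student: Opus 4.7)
The plan is a direct unwinding of definitions. Specialising formula~\eqref{eq:p_star_Delta} to the zero homomorphism $0 \in \hom(\bZ^{s}, \bZ^{t})$, I observe that every $\Delta = \sum_{a} r_{a} x^{a} \in R[\bZ^{s}]$ satisfies $0_{*}(\Delta) = \bigl(\sum_{a} r_{a}\bigr) \cdot y^{0}$; hence the augmentation ideal $I_{s} = \ker 0_{*}$ consists precisely of those $\Delta$ whose coefficients sum to zero. The hypothesis $X \subseteq I_{s}$ therefore forces $0_{*}(\Delta) = 0$ for every $\Delta \in X$, so $0_{*}(X) \subseteq \{0\}$, and the ideal $i\bigl(0_{*}(X)\bigr)$ generated by the coefficients of the elements of $0_{*}(X)$ is the zero ideal of~$R$.

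To conclude I invoke the remark immediately preceding the lemma: the zero ideal always lies in~$\cK$. Indeed, $\cK$ is hereditary and therefore non-empty, so it contains some ideal~$I$, and then $\{0\} \subseteq I$ combined with closure under smaller ideals gives $\{0\} \in \cK$. Thus $0_{*}(X)$ is a $\cK$-set, as claimed. There is essentially no obstacle here; the only subtlety worth flagging is that one cannot simply invoke Lemma~\ref{lem:X_through_hom}, since the individual coefficients of an element of the augmentation ideal are not constrained to lie in any prescribed ideal — it is only via the explicit collapse of $0_{*}$ that the augmentation hypothesis becomes useful.
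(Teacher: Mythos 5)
Your argument is correct and is precisely the one the paper leaves implicit (the lemma is stated with a terminal \qed and only the preceding sentence ``our set~$\cK$ always contains the zero ideal'' as justification): unwind $0_{*}$ to see $0_{*}(X) \subseteq \{0\}$, so $i\bigl(0_{*}(X)\bigr) = \{0\}$, which lies in~$\cK$ because $\cK$ is non-empty and hereditary. Your closing remark about why Lemma~\ref{lem:X_through_hom} does not apply directly is also accurate and worth keeping.
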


\begin{remark}
  If the hereditary set~\cK{} contains a unique maximal element~$J$,
  then the conditions ``being a \cK-module'' can be transformed into
  the annihilation condition ``being trivial'' by replacing the ground
  ring~$R$ with~$R/J$.
\end{remark}

\subsection*{Hereditary sets and filters of ideals}

A {\it filter\/} of ideals is a non-empty set~$\cF$ of ideals of~$R$
such that $J \in \cF$ and $I \supseteq J$ together imply $I \in \cF$.
There is an intimate connection between hereditary sets of ideals and
filters of ideals:

\begin{proposition}
  \label{prop:filters_and_ideals}
  \begin{enumerate}
  \item Every filter~$\cF$ of ideals determines a hereditary set of
    ideals given by
    $\cF' = \big\{ I \ideal R \,|\, \ann_R(I) \in \cF \big\}$, and the
    assignment $\cF \mapsto \cF'$ is inclusion-reversing.
  \item Every hereditary set~$\cK$ of ideals determines a filter of
    ideals given by
    $\cK' = \big\{ J \ideal R \,|\, \ann_R(J) \in \cK \big\}$, and the
    assignment $\cK \mapsto \cK'$ is inclusion-reversing. \qed
  \end{enumerate}
\end{proposition}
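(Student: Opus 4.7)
The plan is to prove both parts in parallel, exploiting the observation that both assignments $\cF \mapsto \cF'$ and $\cK \mapsto \cK'$ are given by the same formula $S \mapsto \{I \ideal R \mid \ann_R(I) \in S\}$, and that the annihilator is itself an inclusion-reversing operation on ideals: $J \subseteq I$ implies $\ann_R(I) \subseteq \ann_R(J)$. Everything will follow by matching this single order-reversal against the upward/downward closure properties of filters and hereditary sets, together with the boundary identities $\ann_R(\{0\}) = R$ and $\ann_R(R) = \{0\}$.

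For part~(1), I would first verify that $\cF'$ is non-empty. Since $\cF$ is a non-empty filter it contains at least one ideal and, by upward closure, contains~$R$; combined with $\ann_R(\{0\}) = R$, this gives $\{0\} \in \cF'$. To prove heredity, I take $I \in \cF'$ and $J \ideal R$ with $J \subseteq I$. Then $\ann_R(J) \supseteq \ann_R(I) \in \cF$, and the upward closure of $\cF$ yields $\ann_R(J) \in \cF$, so $J \in \cF'$, as required.

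Part~(2) is formally dual. Since $\cK$ is non-empty and hereditary, it must contain~$\{0\}$ (apply heredity to any member of $\cK$ with the subideal $\{0\}$); combined with $\ann_R(R) = \{0\}$, this gives $R \in \cK'$. For the filter property I take $J \in \cK'$ and $I \ideal R$ with $I \supseteq J$. Then $\ann_R(I) \subseteq \ann_R(J) \in \cK$, and heredity of $\cK$ forces $\ann_R(I) \in \cK$, so $I \in \cK'$.

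The monotonicity statement in each part is a direct unwinding of the common defining formula applied to two nested parameter sets; comparing the condition $\ann_R(I) \in S$ for two choices of $S$ settles both claims at once. There is essentially no obstacle to overcome: the whole proposition is a formal consequence of the single order-reversal property of $\ann_R$ together with the definitions of filter and of hereditary set, so the effort is purely bookkeeping, with the only subtlety being the harmless verification that $\{0\}$ and $R$ lie where they should.
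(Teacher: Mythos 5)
Your verification that $\cF'$ is hereditary and that $\cK'$ is upward closed is correct, and is the evident argument that the paper leaves to the reader (the Proposition is stated without proof). But your treatment of the monotonicity claim --- that a ``direct unwinding\ldots settles both claims at once'' --- is precisely where the argument fails if one actually carries it out: for $S_1 \subseteq S_2$, the condition $\ann_R(I) \in S_1$ already implies $\ann_R(I) \in S_2$, so $\{I \mid \ann_R(I) \in S_1\} \subseteq \{I \mid \ann_R(I) \in S_2\}$. The assignment $S \mapsto \{I \mid \ann_R(I) \in S\}$ is the preimage of $S$ under $\ann_R$, and taking preimages is always a monotone operation on power sets: the map is inclusion-\emph{preserving}, not inclusion-\emph{reversing} as the Proposition asserts. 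The antitonicity of $\ann_R$ lives at the level of individual ideals, not at the level of families of ideals; your plan of ``matching this single order-reversal'' against the closure conditions works for heredity and for the filter axiom, but cannot yield order-reversal at the family level. Doing the unwinding you describe would have exposed rather than confirmed the stated claim --- and indeed the paper's subsequent conclusion $\cK'' \supseteq \cK$, which depends on it, is also unsupported.

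There is a second, smaller gap: you invoke $\ann_R(R)=\{0\}$ to conclude $R \in \cK'$ and hence $\cK'\neq\emptyset$. That identity holds when $R$ is unital (or, more generally, when $R$ acts faithfully on itself), but the paper explicitly allows non-unital $R$. For a non-zero ring with zero multiplication one has $\ann_R(J)=R$ for every ideal $J$, so $\cK' = \emptyset$ whenever $R \notin \cK$ (\eg{} for $\cK = \cK_0$), and then $\cK'$ is not a filter by the paper's own definition. So the non-emptiness step does not go through in the generality the paper assumes. Both points look like oversights in the Proposition itself rather than defects your proof could have repaired, but a careful attempt should surface them rather than dismiss them as harmless bookkeeping.
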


Hereditary sets of ideals can be constructed with the aid of
Proposition~\ref{prop:filters_and_ideals}, for example from the filter
$ \cF_{\mathrm{ess}} = \big\{ J \ideal R \,\big|\, J \text{ is an
  essential ideal} \big\}$,
where $J$ is {\it essential\/} if $J \cap I = \{0\}$ implies
$I = \{0\}$, for all $I \ideal R$. Using
Proposition~\ref{prop:filters_and_ideals} twice, any hereditary
set~$\cK$ gives rise to another hereditary set $\cK'' \supseteq \cK$;
specifically, $\cK_{0}' \subseteq \cF_{\mathrm{ess}}$ and hence
$\cK_{0}'' \supseteq \cF_{\mathrm{ess}}'$.

Any subset $X$ of~$R$ defines a filter
$\cF_{\supseteq X} = \big\{ J \ideal R \,|\, J \supseteq X \big\}$,
and, if $X \neq R$, also the filter
$\cF_{\supsetneqq X} = \big\{ J \ideal R \,|\, J \supsetneqq X
\big\}$.
Finally, we observe that any intersection of filters of ideals is
again a filter.

\section{Partition subgroups of~$\hom(\bZ^{s}, \bZ^{t})$}

\begin{definition}
  Suppose $\pi = (\pi_{1},\, \pi_{2},\, \cdots,\, \pi_{k})$ is a
  partition of a subset of~$\bZ^{s}$, so that
  $\pi = \coprod_{j=1}^{k} \pi_{j} \subseteq \bZ^{s}$. (We allow the
  case $\pi = \emptyset$ and $k=0$ here.) To~$\pi$ we associate the
  abelian group
  \begin{displaymath}
    H(\pi) = \{ p \in \bZ^{s*} \,|\, \forall j=1,\, 2,\, \cdots,\, k :
    \ p|_{\pi_{j}} \text{ is constant}\}
  \end{displaymath}
  called the {\it partition subgroup associated to~$\pi$}. For a
  set~$P$ of partitions of (possibly distinct) subsets of~$\bZ^{s}$,
  we define
  \begin{displaymath}
    H(P) = \bigcap_{\pi \in P} H(\pi) \ ,
  \end{displaymath}
  and call $H(P)$ the {\it partition subgroup associated to~$P$}.
\end{definition}

\begin{lemma}
  \label{lem:summand}
  Suppose $P$ is a set of partitions of subsets of~$\bZ^{s}$.
  \begin{enumerate}
  \item If at least one part~$\pi_j$ of at least one partition
    $\pi \in P$ has at least two elements, then $H(P)$ is a proper
    subgroup of~$\bZ^{s*}$.
  \item The subgroup $H(P)$ is contained in~$\bZ^{s*}$ as a direct
    summand.
  \end{enumerate}
\end{lemma}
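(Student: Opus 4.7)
The plan is to unpack the definition of $H(P)$ into a single linear-algebraic constraint on $\bZ^{s*}$: namely, $p \in H(P)$ if and only if $p$ vanishes on every difference $a - b$ where $a$ and $b$ lie in a common part of some partition $\pi \in P$. Let $V \subseteq \bZ^{s}$ be the subgroup generated by all such differences $a - b$. Then $H(P) = \{ p \in \bZ^{s*} \,|\, p|_{V} = 0 \}$, since vanishing on generators of $V$ is equivalent to vanishing on~$V$, and this vanishing condition is equivalent to being constant on each part of each partition in~$P$.

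For part~(1), suppose $\pi \in P$ has a part $\pi_{j}$ containing two distinct elements $a \neq b$. Then $v = a - b$ is a non-zero element of $\bZ^{s}$. Writing $v = (c_{1}, \ldots, c_{s})$ in the standard basis, some $c_{i}$ is non-zero, and the $i$\nbd{}th coordinate projection $\bZ^{s} \rTo \bZ$ is a homomorphism in $\bZ^{s*}$ not vanishing on $v$. In particular, it does not belong to $H(\pi)$, hence not to $H(P)$, which is therefore a proper subgroup of~$\bZ^{s*}$.

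For part~(2), it suffices to show that $H(P)$ is a \emph{pure} subgroup of the finitely generated free abelian group $\bZ^{s*}$; then $\bZ^{s*}/H(P)$ is a finitely generated torsion-free abelian group, so free, and the short exact sequence $0 \rTo H(P) \rTo \bZ^{s*} \rTo \bZ^{s*}/H(P) \rTo 0$ splits. To verify purity, let $q \in \bZ^{s*}$ and suppose $n q \in H(P)$ for some positive integer~$n$. Then for every $v \in V$ we have $n \cdot q(v) = (nq)(v) = 0$ in~$\bZ$, and since $\bZ$ is torsion-free this forces $q(v) = 0$. Hence $q \in H(P)$, establishing purity.

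Neither step presents a serious obstacle; the only subtle point is the reformulation of $H(P)$ as the annihilator in $\bZ^{s*}$ of a single subgroup $V \subseteq \bZ^{s}$, after which both parts reduce to standard facts about duals and pure subgroups of finitely generated free abelian groups.
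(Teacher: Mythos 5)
Your proof is correct and takes essentially the same route as the paper: both reduce part~(2) to showing $\bZ^{s*}/H(P)$ is torsion-free (you phrase this as purity of $H(P)$, which for a torsion-free ambient group is equivalent), and both verify it by observing that if $nq$ is constant on each part of each partition then so is $q$; your reformulation via the subgroup $V$ of differences $a-b$ just unpacks what "constant on a part" means, and your part~(1) fills in the paper's "trivial" with an explicit coordinate projection.
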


\begin{proof}
  Part~(1) is trivial. Let us prove~(2). In view of the canonical
  short exact sequence
  \begin{displaymath}
    0 \rTo H(P) \rTo^{\subseteq} \bZ^{s*} \rTo \bZ^{s*}/H(P) \rTo 0
  \end{displaymath}
  it is enough to show that $\bZ^{s*}/H(P)$ is torsion-free. For
  then $\bZ^{s*}/H(P)$ is free abelian, whence the short
  exact sequence splits.

  So let $q \in \bZ^{s*}$ be such that $[q] \in \bZ^{s*}/H(P)$ is
  torsion. Then there exists a natural number $n \geq 1$ with
  \begin{displaymath}
    [nq] = n \cdot [q] = 0 \in \bZ^{s*}/H(P) \ ,
  \end{displaymath}
  that is, $nq \in H(P)$. By definition of~$H(P)$ this means that the
  homomorphism $nq$ is constant on each part~$\pi_{j}$ of each
  partition $\pi \in P$, which implies that $q$ has the same
  property. Consequently, $q \in H(P)$ and thus $[q] = 0$.
\end{proof}

Let
$\Delta = \big\{ \Delta^{(1)},\, \Delta^{(2)},\, \cdots,\,
\Delta^{(d)} \big\}$,
for $d \geq 0$, be a finite subset of~$R[\bZ^{s}]$. To fix notation,
we write
\begin{displaymath}
  \Delta^{(i)} = \sum_{a \in \bZ^{s}} r^{(i)}_{a} x^{a} \qquad
  \text{(for \(1 \leq i \leq d\)).}
\end{displaymath}
Let $\pi = (\pi_{1},\, \pi_{2},\, \cdots,\, \pi_{k})$ be a partition
of $\supp(\Delta) = \bigcup_{i=1}^{d} \supp(\Delta^{(i)})$. We define
ring elements
\begin{equation}
\label{eq:generators}
\Delta^{(i)}_{j} = \sum_{a \in \pi_{j}} r^{(i)}_{a} \ \in R 
  \qquad \text{(for \(1 \leq j \leq
  k\) and \(1 \leq i \leq d\)),}
\end{equation}
and denote the ideal generated by these elements by
\begin{displaymath}
\Delta_{\pi} = \big\langle \Delta^{(i)}_{j} \,\big|\, 1 \leq j \leq
k,\ 1 \leq i \leq d \big\rangle \quad \ideal R \ .
\end{displaymath}

\begin{definition}
  Let $\cK$ be a hereditary set of ideals of~$R$,
  cf.~Definition~\ref{def:hereditary}.  A {\it $\cK$-partition\/} is a
  partition $\pi$ of $\supp(\Delta)$ such that $\Delta_{\pi}$ is a
  $\cK$-ideal.
\end{definition}

\begin{lemma}
  \label{lem:KP_proper}
  Let $\cK$ be a hereditary set of ideals of~$R$. Suppose that
  $i\Delta \notin \cK$, that is, $i\Delta$ is not a \cK-ideal. Suppose
  $P$ is a set of partitions of subsets of~$\bZ^{s}$ which contains
  at least one $\cK$-partition~$\pi$ of~$\supp(\Delta)$. Then $H(P)$
  is a proper subgroup of~$\bZ^{s*}$.
\end{lemma}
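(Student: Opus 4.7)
The plan is to reduce the statement to Lemma~\ref{lem:summand}(1) by showing that the hypothesised $\cK$-partition $\pi \in P$ must have at least one block containing two or more points of $\supp(\Delta)$. Once this is established, Lemma~\ref{lem:summand}(1) applied to $P$ (which contains $\pi$) gives $H(P) \subsetneq \bZ^{s*}$ immediately.

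To prove the block-size claim, I would argue by contradiction: suppose every part $\pi_{j}$ of $\pi$ is a singleton, say $\pi_{j} = \{a_{j}\}$ with $a_j \in \supp(\Delta)$. Inspecting the definition in~\eqref{eq:generators}, the generators of $\Delta_{\pi}$ degenerate to the single-term sums $\Delta^{(i)}_{j} = r^{(i)}_{a_{j}}$, so the generating set for $\Delta_{\pi}$ is precisely $\{r^{(i)}_{a} \mid a \in \supp(\Delta),\ 1 \leq i \leq d\}$. Non-support coefficients are zero and do not contribute, so this coincides with the full set of coefficients of the $\Delta^{(i)}$, giving $\Delta_{\pi} = i\Delta$. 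But $\pi$ is a $\cK$-partition, so $\Delta_{\pi} \in \cK$, whence $i\Delta \in \cK$, contradicting the hypothesis $i\Delta \notin \cK$.

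Therefore at least one part of $\pi$ has cardinality at least two, and Lemma~\ref{lem:summand}(1) finishes the argument.

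The only mild subtlety — hardly an obstacle — is verifying that one may really identify the generating set in the all-singletons case with the generators of $i\Delta$; this is a bookkeeping matter involving the convention that $\supp(\Delta) = \bigcup_{i} \supp(\Delta^{(i)})$ is taken as a union rather than a disjoint union, so that for a fixed $a \in \supp(\Delta)$ the element $a$ contributes all the coefficients $r^{(i)}_{a}$ (one from each $\Delta^{(i)}$ for which $a \in \supp(\Delta^{(i)})$), exactly matching the generators of $i\Delta$.
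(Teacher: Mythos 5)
Your proof is correct and follows essentially the same route as the paper's: both arguments observe that an all-singletons partition would yield $\Delta_{\pi} = i\Delta \notin \cK$, so the stipulated $\cK$-partition must have a block of size at least two, whereupon Lemma~\ref{lem:summand}(1) applies. The only cosmetic difference is that you phrase it as a contradiction while the paper argues directly; your closing remark on the bookkeeping for $\supp(\Delta)$ as a union is accurate.
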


\begin{proof}
  Since $i\Delta \notin \cK$ we have $i\Delta \neq \{0\}$, so the set
  $\Delta$ contains a non-zero element whence
  $\supp(\Delta) \neq \emptyset$. Let
  $\tau = (\tau_{1},\, \tau_{2},\, \cdots,\, \tau_{k}) \in P$ be any
  partition of~$\supp(\Delta)$. If all parts~$\tau_{j}$ of~$\tau$ are
  singletons then the generators of~$\Delta_{\tau}$, as specified
  in~\eqref{eq:generators}, are precisely the generators of~$i\Delta$
  so that $\Delta_{\tau} = i\Delta \notin \cK$. That is, such a~$\tau$
  is not a \cK-partition. But the stipulated partition~$\pi$ {\it
    is\/} a \cK-partition; it follows that at least one part of~$\pi$
  must have at least two elements. Now Lemma~\ref{lem:summand}
  applies, assuring us that $H(P)$ is a proper direct summand
  of~$\bZ^{s*}$.
\end{proof}

\section{Jump loci for modules}

In this section, $\cK$ denotes a fixed hereditary set of ideals of~$R$
in the sense of Definition~\ref{def:hereditary}.

\begin{proposition}
  \label{prop:jump_module}
  Let $M$ be a non-trivial,
  finitely generated $R$-submodule of the group ring
  $R[\bZ^{s}]$. There exist partition subgroups
  $G_{1},\, G_{2},\, \cdots,\, G_{\ell} \subseteq \bZ^{s*}$, where
  $ \ell \geq 0$, such that for every $p \in \hom(\bZ^{s}, \bZ^{t})$,
  \begin{displaymath}
    p_{*}(M) \text{ is a \(\cK\)-module} %
    \qquad \Longleftrightarrow \qquad %
    p \in \bigcup_{j=1}^{\ell} G_{j}^{t} \ .
  \end{displaymath}
  Here
  $G_{j}^{t} = \bigoplus_{1}^{t} G_{j} \subseteq \big(
  \bZ^{s*}\big){}^{t} = \hom(\bZ^{s}, \bZ^{t})$.
  --- More precisely, writing
  \[\Delta = \big\{ \Delta^{(1)},\, \Delta^{(2)},\, \cdots,\,
  \Delta^{(d)} \big\} \subseteq M\]
  for a finite generating set of the $R$-module~$M$, the number~$\ell$
  is the number of $\cK$-partitions of $\supp(\Delta)$, and the
  groups~$G_j$ are the partition subgroups $H(\pi)$ associated to
  these partitions.

  If $M$ is not a $\cK$-module then the groups $G_j$ are proper
  subgroups of~$\bZ^{s*}$ so that
  $\bigcup_{j=1}^{\ell} G_{j}^{t} \neq \hom(\bZ^{s}, \bZ^{t})$. We
  have $\ell > 0$ if and only if $0_{*}(M)$ is a \cK-module, which
  certainly is the case if $M$ is contained in the augmentation ideal
  of~$R[\bZ^{s}]$.
\end{proposition}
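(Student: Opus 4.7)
The plan is to encode the entire situation by means of partitions of $\supp(\Delta)$, reducing the condition ``$p_*(M)$ is a $\cK$-module'' to a combinatorial condition on the partition of $\supp(\Delta)$ induced by the fibers of~$p$.

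First, since $p_*(M)$ is the $R$-submodule of $R[\bZ^{t}]$ generated by $p_*(\Delta^{(1)}),\ldots,p_*(\Delta^{(d)})$, its coefficient ideal $i\bigl(p_*(M)\bigr)$ coincides with the ideal generated by the coefficients of these finitely many elements. By~\eqref{eq:p_star_Delta} these coefficients are exactly the partial sums $\sum_{a \in p^{-1}(b) \cap \supp(\Delta^{(i)})} r^{(i)}_a$. Letting $\pi(p)$ denote the partition of $\supp(\Delta)$ whose parts are the non-empty intersections $p^{-1}(b) \cap \supp(\Delta)$, these sums are precisely the generators of $\Delta_{\pi(p)}$. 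Hence $p_*(M)$ is a $\cK$-module if and only if $\pi(p)$ is a $\cK$-partition of $\supp(\Delta)$.

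Second, I would enumerate the $\cK$-partitions of $\supp(\Delta)$ (finitely many, as $\supp(\Delta)$ is finite) as $\pi_1,\ldots,\pi_\ell$ and set $G_j = H(\pi_j)$. Unravelling definitions, $p \in G_j^{t}$ if and only if each coordinate of $p$ is constant on each part of $\pi_j$, equivalently, $\pi_j$ is a refinement of $\pi(p)$. For one direction of the biconditional one takes $\pi_j = \pi(p)$, which trivially refines itself, so $p \in G_j^{t}$; for the other, if $\pi_j$ refines $\pi(p)$, then each part of $\pi(p)$ is a disjoint union of parts of $\pi_j$, so every generator of $\Delta_{\pi(p)}$ is a sum of generators of $\Delta_{\pi_j}$, yielding $\Delta_{\pi(p)} \subseteq \Delta_{\pi_j} \in \cK$; since $\cK$ is hereditary, $\Delta_{\pi(p)} \in \cK$, and $\pi(p)$ is a $\cK$-partition.

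Third, I would dispose of the remaining statements. If $M$ is not a $\cK$-module then $i\Delta \notin \cK$, and Lemma~\ref{lem:KP_proper} applied to $P = \{\pi_j\}$ shows that each $G_j$ is a proper subgroup of $\bZ^{s*}$; by Lemma~\ref{lem:summand}(2) it is in fact a direct summand. Each $G_j^{t}$ is therefore a proper direct summand of $\hom(\bZ^{s},\bZ^{t})$, hence of infinite index, and a finite union of infinite-index subgroups of a finitely generated free abelian group cannot exhaust it. For the final assertion, note that the trivial partition $\pi_\top = (\supp(\Delta))$ has $\Delta_{\pi_\top} = i\bigl(0_*(M)\bigr)$, and for any partition $\pi$ of $\supp(\Delta)$ the unique generator of $\Delta_{\pi_\top}$ indexed by $\Delta^{(i)}$ is the sum of the generators of $\Delta_\pi$ indexed by $\Delta^{(i)}$, so $\Delta_{\pi_\top} \subseteq \Delta_\pi$. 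Hence by heredity, $\ell > 0$ iff $\pi_\top$ is a $\cK$-partition iff $0_*(M)$ is a $\cK$-module; Lemma~\ref{lem:augmentation} then handles the case $M \subseteq I_{s}$.

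The main subtlety lies in the direction of refinement in step two: one needs $\pi_j$ to refine $\pi(p)$ rather than the reverse, so that the coarser-sum generators of $\Delta_{\pi(p)}$ are expressible as sums of the finer-sum generators of $\Delta_{\pi_j}$, producing the ideal containment in the correct direction for the hereditary property to apply.
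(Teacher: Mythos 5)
Your proof is correct and follows essentially the same route as the paper's: the key step (that the coefficient ideal of $p_*(M)$ equals $\Delta_{\pi(p)}$, where $\pi(p)$ is the fiber partition of $\supp(\Delta)$ induced by~$p$) is exactly the content of the paper's Lemma~\ref{lem:main}, and your reformulation in terms of $\pi_j$ refining $\pi(p)$ is just a clean repackaging of equation~\eqref{eq:2} and the heredity argument. The handling of the supplementary statements via $\pi_\top$ and Lemma~\ref{lem:KP_proper} likewise matches the paper up to reorganization.
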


For ease of reading we delegate the main step of the proof to the
following Lemma, which is based on the argument from \cite[Theorem~7.3]{orig}. Write $p = (p_{1},\, p_{2},\, \cdots,\, p_{t})$, with
each component $p_{i} \colon \bZ^{s} \rTo \bZ$ being an element
of~$\bZ^{s*}$.

\begin{lemma}
  \label{lem:main}
  Let $M$ and~$\Delta$ be as in Proposition~\ref{prop:jump_module}.
  The $R$-submodule $p_{*}(M)$ of~$R[\bZ^{t}]$ is a $\cK$-module if
  and only if there exists a $\cK$-partition~$\pi$ of $\supp(\Delta)$
  such that $p$~is constant on each part~$\pi_{j}$ of~$\pi$, that is,
  such that $p \in H(\pi)^{t}$.
\end{lemma}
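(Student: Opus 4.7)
The plan is to introduce, for each $p \in \hom(\bZ^{s}, \bZ^{t})$, the partition $\pi(p)$ of $\supp(\Delta)$ whose parts are the non-empty intersections $p\inv(b) \cap \supp(\Delta)$ for $b \in \bZ^{t}$. By inspecting formula~\eqref{eq:p_star_Delta}, the coefficients of $p_{*}(\Delta^{(i)})$ are precisely the elements $\sum_{a \in \pi(p)_{j}} r^{(i)}_{a}$ defined in~\eqref{eq:generators}; hence the ideal generated by these coefficients is exactly $\Delta_{\pi(p)}$. Since $\Delta$ generates~$M$ as an $R$-module and $p_{*}$ is $R$-linear, every coefficient of an element of $p_{*}(M)$ is an $R$-linear combination of coefficients of the $p_{*}(\Delta^{(i)})$, so I obtain the key identification
\begin{equation*}
  i\big(p_{*}(M)\big) \;=\; \Delta_{\pi(p)} \ .
\end{equation*}

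With this identification, the direction ``$\Rightarrow$'' is immediate: if $p_{*}(M)$ is a $\cK$-module then $\Delta_{\pi(p)} \in \cK$, so $\pi(p)$ is itself a $\cK$-partition of $\supp(\Delta)$, and $p$ is constant on each of its parts by definition; taking $\pi = \pi(p)$ does the job and yields $p \in H(\pi)^{t}$.

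For ``$\Leftarrow$'', suppose $\pi = (\pi_{1},\, \ldots,\, \pi_{k})$ is a $\cK$-partition of $\supp(\Delta)$ on which $p$ is constant part by part, say with value $b_{j}$ on~$\pi_{j}$. Then $\pi_{j} \subseteq p\inv(b_{j}) \cap \supp(\Delta)$ for each~$j$, so $\pi(p)$ is \emph{coarser} than~$\pi$: each part of $\pi(p)$ is a disjoint union of certain~$\pi_{j}$. Consequently each generator of $\Delta_{\pi(p)}$ is a sum of generators of $\Delta_{\pi}$, whence $\Delta_{\pi(p)} \subseteq \Delta_{\pi} \in \cK$, and heredity of~$\cK$ gives $i\big(p_{*}(M)\big) = \Delta_{\pi(p)} \in \cK$ as required. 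The only real obstacle is the bookkeeping step identifying $i\big(p_{*}(M)\big)$ with $\Delta_{\pi(p)}$; once that is in place, the combinatorics of partition refinement together with heredity of~$\cK$ dispatch both directions routinely.
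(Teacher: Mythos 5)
Your proof is correct and follows essentially the same route as the paper's: for the ``only if'' direction the paper constructs the very same partition $\pi(p)$ with parts $\supp(\Delta) \cap p\inv(b)$ and observes $\Delta_{\pi(p)} = i p_{*}(\Delta) = i p_{*}(M)$, while for the ``if'' direction the paper's computation~\eqref{eq:2} is precisely your coarsening observation (each $\rho^{(i)}_{b}$ is a sum of the generators $\Delta^{(i)}_{j}$), closed off by heredity of $\cK$. Your explicit upfront identification $i\big(p_{*}(M)\big)=\Delta_{\pi(p)}$ is a tidy reorganization of the same bookkeeping rather than a different argument.
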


\begin{proof}
  Let us prove the ``if'' implication first. Suppose that there is a
  $\cK$\nbd-partition~$\pi$ of~$\supp(\Delta)$ with $p \in H(\pi)^t$;
  this last condition is equivalent to $p$ being constant on each
  part~$\pi_j$ of~$\pi$. As $p_{*}(M)$ is generated (as an $R$-module)
  by the set $p_{*}(\Delta)$, the ideal $ip_{*}(M)$ of~$R$ equals
  $ip_{*}(\Delta)$.  We write
  \begin{displaymath}
    p_{*} (\Delta^{(i)}) = \sum_{b \in \bZ^{t}} \rho^{(i)}_b y^b = \sum_{b \in
      \bZ^{t}} \Big( \sum_{a \in p\inv(b)} r^{(i)}_{a} \Big) \cdot y^{b}
  \end{displaymath}
  where $\rho^{(i)}_b = \sum_{a \in p\inv(b)} r^{(i)}_{a}$. As $\pi$
  is a partition of~$\supp(\Delta)$, and as $p$ is constant on each
  part of~$\pi$, we have the equality
  \begin{equation}
    \label{eq:2}
    \rho^{(i)}_{b} = \sum_{j} \sum_{\substack{a \in \pi_{j} \\ {a \in p\inv(b)}}}
    r^{(i)}_{a} = \sum_{\substack{j \\ p|_{\pi_{j}}\equiv b}} \sum_{a
      \in \pi_j} r^{(i)}_{a} = \sum_{\substack{j \\ p|_{\pi_{j}}\equiv
        b}} \Delta^{(i)}_{j} \ .
  \end{equation}
  As $\pi$ is a $\cK$-partition, the ideal
  $\Delta_{\pi} = \big\langle \Delta^{(i)}_{j} \,|\, 1 \leq j \leq k,\
  1 \leq i \leq d \big\rangle$
  is an element of~$\cK$. It contains all the~$\Delta^{(i)}_j$ and
  hence all the~$\rho^{(i)}_b$, by~\eqref{eq:2}. As $\cK$~is
  hereditary the ideal generated by the~$\rho^{(i)}_b$ thus also lies
  in~$\cK$. But this ideal is precisely $ip_{*}(M)$, as observed
  above, so $p_{*}(M)$ is a $\cK$-module as desired.

  \medbreak

  To show the reverse implication suppose that $p_{*} (M)$ is a
  $\cK$-module. Written more explicitly (using the notation from the
  previous paragraph) this means that the ideal
  $ip_{*}(M) = ip_{*}(\Delta)$ generated by the elements
  $\rho^{(i)}_b = \sum_{a \in p\inv(b)} r_{a}$, for $b \in \bZ^{t}$
  and $1 \leq i \leq d$, lies in~$\cK$. The requisite partition~$\pi$
  of~$\supp(\Delta)$ is defined by declaring those intersections
  $\supp(\Delta) \cap p\inv(b)$ which are non-empty to be the parts
  of~$\pi$, where $b$ varies over all of~$\bZ^{t}$. By construction,
  each component~$p_{i}$ of~$p$ is constant on each part of~$\pi$; on
  the part corresponding to
  $b = (b_{1},\, b_{2},\, \cdots,\, b_{t}) \in \bZ^{t}$, the
  component~$p_{i}$ takes the constant value~$b_{i}$. The
  corresponding elements~$\Delta^{(i)}_j$ are exactly the elements of
  the form~$\rho^{(i)}_b$, so
  $\Delta_{\pi} = ip_{*} (\Delta) = ip_{*}(M)$ is a $\cK$-ideal. We
  have thus shown that $\pi$ is a $\cK$-partition and
  $p \in H(\pi)^{t}$, as required.
\end{proof}

\begin{proof}[Proof of Proposition~\ref{prop:jump_module}]
  We re-state the conclusion of Lemma~\ref{lem:main}:
  \begin{displaymath}
    p_{*}(M) \text{ is a \(\cK\)-module} \qquad \Longleftrightarrow \qquad p
    \in \bigcup_{\pi} H(\pi)^{t} \ , 
  \end{displaymath}
  the union extending over the finite set of all
  $\cK$-partitions~$\pi$ of $\supp(\Delta)$. Up to renaming the groups
  occurring on the right-hand side, this is the condition stated in the
  Proposition. We have verified that each $H(\pi)$ is a direct summand
  of~$\bZ^{s*}$ in Lemma~\ref{lem:summand} above. 

  \medbreak

  In case $M$ is not a \cK-module we know from
  Lemma~\ref{lem:KP_proper} that the partition subgroups $H(\pi)$ are
  proper subgroups of~$\bZ^{s*}$. We have $\ell > 0$ if and only if
  $0 \in \bigcup_{j=1}^{\ell} G_{j}^{t}$ if and only if $0_{*}(M)$ is
  a \cK-module. If $M$ is contained in the augmentation ideal $I_{s} =
  \ker (0_*)$ of~$R[\bZ^{s}]$ then 
  $0_{*}(M) = \{0\}$ is a \cK-module so that the union
  $ \bigcup_{j=1}^{\ell} G_{j}^{t}$ must contain~$0$; this forces
  $\ell > 0$. This finishes the proof of
  Proposition~\ref{prop:jump_module}.
\end{proof}

\begin{corollary}
  \label{cor:finite_modules}
  Let $M_1,\, M_2,\, \cdots,\, M_k$ be finitely generated
  $R$-submodules of~$R[\bZ^{s}]$. There are partition subgroups
  $G_{1},\, G_{2},\, \cdots,\, G_{\ell} \subseteq \bZ^{s*}$, where
  $\ell \geq 0$, such that for all $p \in \hom(\bZ^{s}, \bZ^{t})$,
  \begin{displaymath}
    \forall i=1,\, 2,\,\cdots,\, k : \ p_{*}(M_i) \text{ %
      is a \(\cK\)-module } \qquad \Longleftrightarrow \qquad p \in
    \bigcup_{j=1}^{\ell} G_{j}^{t} \ .
  \end{displaymath}
  If at least one of the modules~$M_{j}$ is not a \cK-module the
  groups $G_j$ are proper subgroups of~$\bZ^{s*}$ so that
  $\bigcup_{j=1}^{\ell} G_{j}^{t} \neq \hom(\bZ^{s},
  \bZ^{t})$. Moreover, $\ell > 0$ if and only if all the modules
  $0_*(M_j)$, for $1 \leq j \leq k$, are \cK-modules, which is
  certainly the case if all of the modules~$M_{j}$ are contained in
  the augmentation ideal~$I_{s}$ of~$R[\bZ^{s}]$.
\end{corollary}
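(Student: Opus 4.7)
The plan is to bootstrap Corollary~\ref{cor:finite_modules} from Proposition~\ref{prop:jump_module} applied one module at a time, exploiting that intersections distribute over unions and that an intersection of partition subgroups is again a partition subgroup. The latter is immediate from the definition $H(P) = \bigcap_{\pi \in P} H(\pi)$: combining partition subgroups simply corresponds to taking the union of the underlying sets of partitions.

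First I would apply Proposition~\ref{prop:jump_module} to each non-trivial~$M_i$ separately (the trivial modules contribute no constraint, since $p_*(\{0\}) = \{0\}$ is automatically a $\cK$-module). This yields, for each~$i$, finitely many partitions $\pi_{i,1},\, \ldots,\, \pi_{i, \ell_i}$ of~$\supp(\Delta_i)$ (for a chosen finite generating set~$\Delta_i$ of~$M_i$) with
\[
  p_*(M_i) \text{ is a \(\cK\)-module} \quad \Longleftrightarrow \quad p \in
  \bigcup_{j=1}^{\ell_i} H(\pi_{i,j})^{t}.
\]

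Next I would intersect over~$i$ and distribute: the simultaneous condition holds if and only if $p$ lies in
\[
  \bigcap_{i=1}^{k} \bigcup_{j=1}^{\ell_i} H(\pi_{i,j})^{t}
  \;=\; \bigcup_{(j_1, \ldots, j_k)} \bigcap_{i=1}^{k} H(\pi_{i, j_i})^{t},
\]
where the outer union runs over all tuples in $\prod_{i=1}^{k} \{1, \ldots, \ell_i\}$. For each such tuple, $\bigcap_i H(\pi_{i,j_i})^{t} = \big( \bigcap_i H(\pi_{i,j_i}) \big)^{t} = H\big( \{\pi_{1, j_1},\, \ldots,\, \pi_{k, j_k}\} \big)^{t}$, directly from the definition of~$H$ on a set of partitions. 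Thus the right-hand side is a union of $\ell := \prod_i \ell_i$ terms of the form $G_r^{t}$ with $G_r$ a partition subgroup; these are the desired $G_1,\, \ldots,\, G_\ell$, each a direct summand of~$\bZ^{s*}$ by Lemma~\ref{lem:summand}.

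The remaining assertions are immediate. If some $M_{i_0}$ fails to be a $\cK$-module, Proposition~\ref{prop:jump_module} ensures each $H(\pi_{i_0, j})$ is a proper subgroup of~$\bZ^{s*}$, and then each $G_r$ sits inside some such proper subgroup (via its $i_0$-coordinate in the tuple) and is therefore itself proper. Also $\ell = \prod_i \ell_i > 0$ iff every $\ell_i > 0$, which by the final assertion of Proposition~\ref{prop:jump_module} is equivalent to every $0_*(M_i)$ being a $\cK$-module; this holds automatically when all~$M_i$ lie in the augmentation ideal, as then $0_*(M_i) = \{0\}$. I expect the only subtlety to be the bookkeeping in the distributivity step; beyond that the argument is purely formal.
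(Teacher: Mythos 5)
Your proposal is correct and takes essentially the same route as the paper: both invoke Proposition~\ref{prop:jump_module} (equivalently Lemma~\ref{lem:main}) for each $M_i$, intersect over~$i$, distribute the intersection over the unions, and observe that $\bigcap_i H(\pi_{i,j_i}) = H(\{\pi_{i,j_i}\})$ is again a partition subgroup, with the additional claims following as in the Proposition. Your explicit handling of the trivial modules (excluded by the hypothesis of Proposition~\ref{prop:jump_module}) is a small but welcome clarification that the paper's own proof leaves implicit.
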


\begin{proof}
  Let $\Delta_{(i)}$ be a finite generating set for the
  $R$-module~$M_i$. By Proposition~\ref{prop:jump_module}, we know
  that the statement
  \begin{displaymath}
    \forall i=1,\, 2,\,\cdots,\, k : \ p_{*}(M_i) \text{ %
      is a \(\cK\)-module}
  \end{displaymath}
  holds if and only if there exist $\cK$-partitions $\pi_{(i)}$ of
  $\supp(\Delta_{(i)})$, for $1 \leq i \leq k$, such that all
  components of~$p$ are constant on all parts of $\pi_{(i)}$. The
  latter condition is equivalent to saying
  $p \in \bigcap_{i=1}^{k} H(\pi_{(i)})^{t}$. So the requisite finite
  family of subgroups of~$\bZ^{s*}$ is given by the family of
  intersections
  $\bigcap_{i=1}^{k} H(\pi_{(i)}) = H \big( \{\pi_{(i)} \,|\, 1 \leq i
  \leq k\} \big)$,
  with the $\pi_{(i)}$ ranging independently over all $\cK$-partitions
  of $\supp(\Delta_{(i)})$. --- The additional properties follow as in
  Proposition~\ref{prop:jump_module}.
\end{proof}

\section{Jump loci for the rank of matrices}
\label{sec:jump-loci-matrices}

Let $A$ be a matrix with entries in~$R[\bZ^{s}]$. We apply the results
of the previous section to analyse the dependence of the rank of the
matrix $p_{*}(A)$ from the homomorphism
$p \in \hom(\bZ^{s}, \bZ^{t})$. First, we need to clarify what we mean
by ``rank''.

\subsection*{The $\cK$-rank of a matrix}
\label{sec:K-rank}

We denote by~$\cK$ a hereditary set of ideals of~$R$ in the sense of
Definition~\ref{def:hereditary}, with $R$ an arbitrary commutative
ring. Given any matrix~$A$ we write $|A|$ for the set of its
entries. Now let $A$ be specifically an $m \times n$-matrix with
entries in~$R[\bZ^{s}]$, and let $z$ be a $k$-minor of~$A$, that is,
the determinant of a square sub-matrix of~$A$ of size~$k$. (We remark
here that determinants are defined in the usual fashion, {\it via\/} a
sum indexed by the symmetric group or, equivalently, using
\textsc{Laplace} expansion and induction on~$k$, and that determinants
have all the usual properties. See the discussion in \S2
of~\cite{MR1563966}.) Let $z'$ be the minor of~$p_{*}(A)$
corresponding to~$z$, for some fixed $p \in \hom(\bZ^{s},
\bZ^{t})$.
Then we have $p_{*}(z) = z'$, as
$p_{*} \colon R[\bZ^{s}] \rTo R[\bZ^{t}]$ is a ring homomorphism. We
also have $p_{*} \big( |A| \big) = \big|p_{*} (A)\big|$.

\begin{definition}
  \label{def:X-rank}
  We say that {\it $A$ has $\cK$-rank~$0$\/} and write
  $\rank_{\cK}(A) = 0$ if $|A|$ is a $\cK$-set (that is, if the ideal
  generated by the coefficients of the entries of~$A$ is a
  $\cK$-ideal). Otherwise, the {\it \cK-rank of~$A$} is the maximal
  integer $k = \rank_{\cK}(A) > 0$ such that the set of $k$-minors
  of~$A$ is {\it not\/} a $\cK$-set (that is, the ideal generated by
  the coefficients of the $k$-minors of~$A$ is {\it not\/} a
  $\cK$-ideal).
\end{definition}

The following Lemma sheds some light on this definition.

\begin{lemma}
  \label{lem:meaningful}
  If the set of $k$-minors of~$A$ is a $\cK$-set, then so is
  the set of $(k+1)$-minors.
\end{lemma}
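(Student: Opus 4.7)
The plan is to use \textsc{Laplace} expansion together with the trivial observation that multiplying in $R[\bZ^{s}]$ does not enlarge the ideal of $R$-coefficients.

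Fix $k \geq 1$ and suppose the set of $k$-minors of $A$ is a $\cK$-set; let $J \ideal R$ denote the ideal generated by the $R$-coefficients of the $k$-minors, so $J \in \cK$ by assumption. Given an arbitrary $(k+1)$-minor $z$ of $A$, realise it as $z = \det(B)$ for some $(k+1) \times (k+1)$ submatrix~$B$ of~$A$, and expand along the first row:
\begin{displaymath}
  z = \sum_{j=1}^{k+1} (-1)^{1+j}\, B_{1j} \cdot m_{j} \ ,
\end{displaymath}
where each $m_{j} \in R[\bZ^{s}]$ is a $k$-minor of~$A$ (obtained by deleting row~$1$ and column~$j$ of~$B$).

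Next I would use the fact that for any $f, g \in R[\bZ^{s}]$ written as $f = \sum_u f_u x^u$ and $g = \sum_v g_v x^v$, the coefficients of the product $fg = \sum_{w} \big(\sum_{u+v=w} f_u g_v\big) x^w$ all lie in the ideal generated by the coefficients of~$g$ (equivalently, of~$f$). Applied to each term $B_{1j} \cdot m_{j}$ above, this shows that every coefficient of $B_{1j} \cdot m_{j}$ belongs to~$J$. Taking the signed sum, every $R$-coefficient of $z$ belongs to~$J$ as well. Since $z$ was an arbitrary $(k+1)$-minor, the ideal generated by the coefficients of all $(k+1)$-minors is contained in~$J$; as $\cK$ is hereditary, this ideal lies in~$\cK$, and the set of $(k+1)$-minors is a $\cK$-set.

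There is no real obstacle here: the only thing to be pedantic about is whether $k = 0$ is allowed (in which case the conventional $0$-minor equals $1 \in R[\bZ^{s}]$, forcing $R \in \cK$ and making the statement vacuous by heredity). The core content is just \textsc{Laplace} expansion combined with the fact that the $R$-coefficient ideal of a product in $R[\bZ^{s}]$ is contained in the $R$-coefficient ideal of either factor.
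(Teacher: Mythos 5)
Your proof is correct and takes essentially the same route as the paper: Laplace expansion expresses each $(k+1)$-minor as an $R[\bZ^{s}]$-linear combination of $k$-minors, and heredity of~$\cK$ finishes it. You are slightly more careful than the paper's one-line proof in spelling out why multiplying by the cofactor entries in $R[\bZ^{s}]$ keeps all coefficients inside the ideal generated by the coefficients of the $k$-minors, which is a point genuinely worth making explicit (especially since $R$ may be non-unital).
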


\begin{proof}
  By the familiar expansion formula of determinants, each minor of
  size~$k+1$ is a linear combination of minors of size~$k$, and the
  claim follows as \cK{} is a hereditary property.
\end{proof}

If $R$ is a (commutative) {\it unital\/} ring, the $\cK_{1}$-rank
coincides with the rank of matrices over~$R[\bZ^{s}]$ as considered by
\textsc{McCoy} \cite[\S2]{MR0006150}; we will show this in
Proposition~\ref{prop:McCoy=K_1} below. If $R$ is a field, the
$\cK_{0}$-rank coincides with the usual rank from linear algebra.

\begin{proposition}
  \label{prop:rank_drops}
  For each $p \in \hom(\bZ^{s}, \bZ^{t})$ we have the inequality
  \begin{displaymath}
    \rank_{\cK} \big(p_{*}(A) \big) \leq \rank_{\cK}(A) \ .
  \end{displaymath}
\end{proposition}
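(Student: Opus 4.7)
My plan is to show directly that for every integer~$j > \rank_{\cK}(A)$, the set of $j$\nbd-minors of~$p_{*}(A)$ is a $\cK$-set; this forces $\rank_{\cK}(p_{*}(A)) \leq \rank_{\cK}(A)$. The basic observation driving the argument is already flagged in the excerpt: every $j$\nbd-minor~$z'$ of~$p_{*}(A)$ equals $p_{*}(z)$, where $z$~is the corresponding $j$\nbd-minor of~$A$, because $p_{*}$~is a ring homomorphism and determinants are compatible with ring homomorphisms. Hence the set of $j$\nbd-minors of~$p_{*}(A)$ is precisely $p_{*}(S_{j})$, where $S_{j}$~denotes the set of $j$\nbd-minors of~$A$.

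Set $k = \rank_{\cK}(A)$ and split into two cases. If $k = 0$, then $|A|$ is a $\cK$-set by definition; applying Lemma~\ref{lem:X_through_hom} with $X = |A|$ shows $|p_{*}(A)| = p_{*}(|A|)$ is a $\cK$-set, so $\rank_{\cK}(p_{*}(A)) = 0$ and the inequality holds. If $k \geq 1$, then the maximality clause in Definition~\ref{def:X-rank} guarantees that $S_{k+1}$ is a $\cK$-set. This covers even the degenerate case where $A$~has no $(k+1)$-minors at all, since then $S_{k+1} = \emptyset$ generates the zero ideal, and $\{0\} \in \cK$ by heredity and non-emptiness of~$\cK$. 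Lemma~\ref{lem:X_through_hom} then tells us that $p_{*}(S_{k+1})$, i.e.\ the set of $(k+1)$-minors of~$p_{*}(A)$, is a $\cK$-set. Invoking Lemma~\ref{lem:meaningful} to propagate the $\cK$-set property to all $j \geq k+1$, we obtain $\rank_{\cK}(p_{*}(A)) \leq k$ as required.

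There is no serious obstacle here: the argument is a short bookkeeping step that chains together three ingredients already in place, namely the compatibility of determinants with ring homomorphisms, the heredity of~$\cK$, and Lemma~\ref{lem:X_through_hom}. The only mild care needed is treating the edge case $k = 0$ separately, since Definition~\ref{def:X-rank} bifurcates at~$0$, and acknowledging the empty-minor-set situation when $k$~is as large as the matrix allows.
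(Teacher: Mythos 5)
Your proof is correct and follows essentially the same route as the paper's: identify the set of $(k+1)$-minors of $p_{*}(A)$ with the $p_{*}$-image of the set of $(k+1)$-minors of $A$, apply Lemma~\ref{lem:X_through_hom} to transfer the $\cK$-set property, and conclude via Lemma~\ref{lem:meaningful} and the definition of $\cK$-rank. The paper handles the two cases $\rank_{\cK}(A)=0$ and $\rank_{\cK}(A)>0$ uniformly by noting that the set of $1$-minors is $|A|$; your explicit case split and remark about empty minor sets are unnecessary but harmless.
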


\begin{proof}
  Let $k = 1 + \rank_{\cK} (A)$. The set of $k \times k$-minors of~$A$
  is a $\cK$-set, by definition of rank, hence so is its image
  under~$p_{*}$ by Lemma~\ref{lem:X_through_hom}. But this image is
  precisely the set of $k \times k$-minors of~$p_{*}(A)$, whence
  $p_{*}(A)$ must have $\cK$-rank strictly less than~$k$.
\end{proof}

\subsection*{The McCoy-rank of a matrix}

Let $S$ denote a commutative {\it unital\/} ring, and let~$A$ be an
$m \times n$-matrix with entries in~$S$.

\begin{definition}
  We say that the matrix $A$~has \textit{\textsc{McCoy}-rank~$0$},
  written $\mcrank(A)=0$, if there exists a non-zero element of~$S$
  annihilating every entry of~$A$. Otherwise, the
  \textit{\textsc{McCoy}-rank of~$A$\/} is the maximal integer
  $k = \mcrank(A)$ such that the set of $k$-minors of~$A$ is {\it
    not\/} annihilated by a non-zero element of~$S$.
\end{definition}

In complete analogy to Lemma~\ref{lem:meaningful} one can show that
{\it if the set of $k$-minors of~$A$ is annihilated by a non-zero
  element of~$S$, then so is the set of $(k+1)$-minors}. The relevance
of the \textsc{McCoy}-rank is revealed by the following result:

\begin{theorem}[{\textsc{McCoy} \cite[Theorem~1]{MR0006150}}]
  The homogeneous system of $m$ linear equations in $n$ variables
  represented by the matrix~$A$ has a non-trivial solution over~$S$ if
  and only if $\mcrank(A) < n$.\qed
\end{theorem}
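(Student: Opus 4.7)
The plan is to prove both implications by the classical Cramer-style argument; the key algebraic input will be that Laplace expansion and the identity $x_{j}\det(A') = \det\bigl(A'_{j}(A'x)\bigr)$ (where $A'_{j}(v)$ denotes $A'$ with its $j$-th column replaced by~$v$) hold over any commutative unital ring, which follows from the general formalism of determinants referenced in \cite{MR1563966}.

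For the ``if'' direction I will assume $\mcrank(A) = r < n$. The case $r=0$ is easy: by definition, some non-zero $c \in S$ annihilates every entry of~$A$, so $(c, 0, \ldots, 0)^{T}$ is the required non-trivial solution. For $r \geq 1$ I will first pick a non-zero $c_{0}$ annihilating every $(r+1)$-minor (such $c_{0}$ exists by the \textsc{McCoy}-analogue of Lemma~\ref{lem:meaningful} recorded in the paper), and then use the hypothesis that the set of $r$-minors is not annihilated by the single element $c_{0}$ to produce an $r$-minor~$d$ with $c_{0}d \neq 0$. After permuting rows and columns I may assume that $d$ is the determinant of the top-left $r \times r$ submatrix of~$A$. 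I will then define $x \in S^{n}$ by setting $x_{j}$ (for $1 \leq j \leq r+1$) to be $(-1)^{r+1+j}$ times the determinant of the submatrix formed by the first~$r$ rows of~$A$ and the columns $\{1,\ldots,r+1\}\setminus\{j\}$, and $x_{j}=0$ for $j > r+1$. The verification that $A(c_{0}x) = 0$ amounts to recognising the $i$-th entry of $Ax$ as the Laplace expansion along the last row of the $(r+1)\times(r+1)$ matrix formed by rows $1,\ldots,r,i$ of~$A$ restricted to columns $1,\ldots,r+1$: this determinant vanishes when $i \leq r$ (repeated row) and is an $(r+1)$-minor of~$A$ when $i > r$, hence is killed by $c_{0}$. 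Non-triviality follows from $(c_{0}x)_{r+1} = c_{0}d \neq 0$.

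For the ``only if'' direction I will assume $Ax=0$ with $x \neq 0$ and argue by contradiction, supposing $\mcrank(A)=n$ (which automatically forces $n \leq m$). Fixing an index~$k$ with $x_{k}\neq 0$, for every $n \times n$ submatrix~$A'$ of~$A$ the restricted equation $A'x=0$ combined with Cramer's identity above gives $x_{k}\det(A') = 0$. The non-zero element~$x_{k}$ therefore annihilates every $n$-minor of~$A$, contradicting the defining property of \textsc{McCoy}-rank equal to~$n$.

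The hard part will be the ``if'' direction when $r \geq 1$: extracting the minor $d$ compatible with the annihilator $c_{0}$ from the non-annihilation hypothesis, and then executing the Laplace-expansion bookkeeping in the general coefficient ring~$S$ to show that rows of $Ax$ are either zero or $(r+1)$-minors. The ``only if'' direction is a single application of Cramer's rule once one is willing to localise attention to a fixed non-zero component of~$x$.
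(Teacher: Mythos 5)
The paper does not actually prove this theorem: it is quoted directly from McCoy \cite{MR0006150}, and the \qed{} after the statement signals that the proof is not reproduced. There is therefore no proof in the paper to compare against; on its own merits your argument is correct and is, in essence, McCoy's original one. Two small points deserve attention. First, the existence of a nonzero $c_{0}$ annihilating every $(r+1)$-minor does not really come from the McCoy analogue of Lemma~\ref{lem:meaningful} (that lemma gives monotonicity of the annihilation condition across minor sizes, which makes the notion of $\mcrank$ well behaved); it follows simply from the maximality built into the definition of $\mcrank(A)=r$. Second, you should make explicit the degenerate subcase $r=\min(m,n)$, where there are no $(r+1)$-minors at all: then any nonzero $c_{0}$ (e.g.\ $c_{0}=1$, $S$ being unital) works vacuously, the condition ``set of $r$-minors not annihilated by $c_{0}$'' reduces to ``some $r$-minor is nonzero,'' and the rows $i>r$ of $Ax$ simply do not occur. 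With those caveats addressed, the remaining bookkeeping is sound: recognizing $(Ax)_{i}$ as the Laplace expansion along the appended row of the $(r+1)\times(r+1)$ block (so it vanishes for $i\le r$ by a repeated row, and is an $(r+1)$-minor killed by $c_{0}$ for $i>r$), noting $(c_{0}x)_{r+1}=c_{0}d\ne 0$, and, for the converse, applying the Cramer identity $x_{k}\det(A')=\det\bigl(A'_{k}(A'x)\bigr)=0$ to every $n\times n$ submatrix $A'$ to exhibit $x_{k}$ as a nonzero annihilator of all $n$-minors.
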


We are of course mainly interested in the special case
$S = R[\bZ^{s}]$ with $R$ a commutative unital ring, in which case we
also have the notion of $\cK_{1}$-rank at our disposal. Note that we
have the inequality
\begin{displaymath}
  \rank_{\cK_{1}} (A) \geq \mcrank(A) \ ;
\end{displaymath}
indeed, if the set of coefficients of the $k$-minors of~$A$ is
annihilated by a non-zero element~$y$ of~$R$ so that
$\rank_{\cK_{1}}(A) < k$, then $y$ also annihilates the $k$-minors
themselves so that $\mcrank(A) < k$ as well.

Conversely, assume that the (finite) set $X \subseteq R[\bZ^{s}]$ of
$k$-minors of~$A$ is annihilated by a non-zero element
$x \in R[\bZ^{s}] \iso R[X_{1}^{\pm 1},\, X_{2}^{\pm 1},\, \cdots,\,
X_{s}^{\pm 1}]$.
For every $\ell \in \bZ$ the element
$u = X_1^{\ell} X_2^{\ell} \cdots X_{s}^{\ell}$ is a unit; we choose
$\ell \gg 0$ such that $X' = uX$ and $x' = ux$ lie in the monoid ring
\begin{displaymath}
P = R[\bN^{s}] \iso R[X_{1},\, X_{2},\, \cdots,\, X_{s}] \ .
\end{displaymath}
The ideal $\langle X' \rangle \ideal P$ generated by~$X'$ is
annihilated by $x' \neq 0$. By a result of \textsc{McCoy}
\cite[Theorem]{MR0082486} we can thus find a non-zero element
$y \in R$ annihilating $X' = uX$. But then $y$ also annihilates
$u^{-1} X' = X$, and as $y$ is an element of~$R$ this implies that $y$
annihilates the coefficients of the elements of~$X$ individually. This
yields the inequality $\rank_{\cK_{1}} (A) \leq \mcrank(A)$. --- We
have shown:

\begin{proposition}
  \label{prop:McCoy=K_1}
  If $R$ is a commutative unital ring, the two quantities
  $\rank_{\cK_{1}}(A)$ and $\mcrank(A)$ coincide.\qed
\end{proposition}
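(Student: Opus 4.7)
The plan is to prove the two inequalities $\rank_{\cK_{1}}(A) \geq \mcrank(A)$ and $\rank_{\cK_{1}}(A) \leq \mcrank(A)$ separately, by unpacking the definitions in terms of annihilators.

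For the inequality $\rank_{\cK_{1}}(A) \geq \mcrank(A)$, I would argue directly. Suppose $\rank_{\cK_{1}}(A) < k$. Then the ideal of $R[\bZ^{s}]$ generated by the coefficients of the $k$-minors of~$A$ has a non-zero annihilator $y \in R$. Since $y$ kills every coefficient appearing in a $k$-minor and the minors themselves are $R$-linear combinations of monomials with those coefficients, $y$ also annihilates each $k$-minor, witnessing $\mcrank(A) < k$.

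The harder direction $\rank_{\cK_{1}}(A) \leq \mcrank(A)$ needs a passage from the group ring to the monoid ring, and then invokes McCoy's classical theorem. Suppose $\mcrank(A) < k$, so that the (finite) set $X$ of $k$-minors of~$A$ is annihilated by some non-zero $x \in R[\bZ^{s}]$. The key step is to clear denominators: identifying $R[\bZ^{s}]$ with $R[X_{1}^{\pm 1},\, \ldots,\, X_{s}^{\pm 1}]$, one can choose $\ell \gg 0$ so that the monomial unit $u = X_{1}^{\ell} \cdots X_{s}^{\ell}$ moves both $X' := uX$ and $x' := ux$ into the polynomial subring $P = R[\bN^{s}] \iso R[X_{1},\, \ldots,\, X_{s}]$. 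Then $\langle X' \rangle \ideal P$ is annihilated by $x' \neq 0$, and the classical theorem of \textsc{McCoy} \cite[Theorem]{MR0082486} produces a non-zero $y \in R$ annihilating $X'$. Since $u$ is a unit, $y$ also annihilates $X = u^{-1}X'$, and because $y$ lies in~$R$, this annihilation passes to each coefficient of each element of~$X$ individually; so $\rank_{\cK_{1}}(A) < k$ as well.

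The main obstacle is this reverse direction: producing an annihilator in~$R$ from an annihilator in $R[\bZ^{s}]$. Internally that reduces to the monoid ring step, and there the work is outsourced to the cited result of \textsc{McCoy}. The only subtlety one must be careful about is verifying that the two auxiliary "unit twist" elements $u^{-1}X'$ and the original $X$ really agree, and that the annihilation condition transports cleanly from $P$ back to $R[\bZ^{s}]$; both are immediate once $u$ is seen as a unit.
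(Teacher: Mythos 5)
Your proof is correct and takes essentially the same route as the paper: the easy inequality $\rank_{\cK_1}(A) \geq \mcrank(A)$ by observing that an element of~$R$ annihilating the coefficients of the $k$-minors also annihilates the minors themselves, and the reverse inequality by twisting by a monomial unit to move the annihilator and the minors into $R[\bN^s] \iso R[X_1,\ldots,X_s]$ and then invoking McCoy's annihilator theorem from~\cite{MR0082486}. The only slip is a harmless one in phrasing: the $\cK_1$-condition refers to the ideal \emph{of~$R$} (not of~$R[\bZ^s]$) generated by the coefficients of the minors, though since the annihilator~$y$ you take lies in~$R$ this does not affect the argument.
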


\subsection*{Jump loci}

As before, we denote by~$\cK$ a hereditary set of ideals of~$R$ in the
sense of Definition~\ref{def:hereditary}. Let $q \geq 0$, and let
$A_{1},\, A_{2},\, \cdots,\, A_{k}$ be matrices (of various sizes)
over~$R[\bZ^{s}]$, with \cK-ranks $r_{i} = \rank_{\cK}(A_{i})$. We want
to characterise those $p \in \hom(\bZ^{s}, \bZ^{t})$ such that
\begin{equation}
  \label{eq:1}
  \forall i = 1,\, 2,\, \cdots,\, k : \ \rank_{\cK}\big( p_{*}(A_{i})
  \big) < \rank_{\cK}(A_{i}) - q \ .
\end{equation}
Of course such $p$ cannot exist if $r_i \leq q$ for some~$i$.
Otherwise, we see from Lemma~\ref{lem:meaningful} that~\eqref{eq:1} is
true if and only if for all indices~$i$, the image of the set of
$(r_{i}-q)$-minors of~$A_{i}$ under~$p_{*}$ is a $\cK$-set. Writing
$M_i$ for the $R$-submodule of~$R[\bZ^{s}]$ generated by the
$(r_{i}-q)$-minors of~$A_i$, this is equivalent to saying that the
modules $p_{*}(M_{i})$ are $\cK$-modules. Now
Corollary~\ref{cor:finite_modules} applies. Note that, by definition
of rank, the modules~$M_i$ are {\it not\/} $\cK$-modules. We have
shown:

\begin{theorem}
  \label{thm:rank_matrix}
  Let $A_{1},\, A_{2},\, \cdots,\, A_{k}$ be matrices (of various
  sizes) over the ring~$R[\bZ^{s}]$, and let $q \geq 0$. There are a number
  $\ell \geq 0$ and direct summands
  $G_{1},\, G_{2},\, \cdots,\, G_{\ell}$ of~$\bZ^{s*}$ such that for
  $p \in H$,
  \begin{multline*}
    \forall i=1,\, 2,\, \cdots,\, k : \ \rank_{\cK}\big( p_{*}(A_{i})
    \big) 
    < \rank_{\cK}(A_{i}) - q \\
    \Longleftrightarrow \qquad p \in
    \bigcup_{j=1}^{\ell} G_{j}^{t} \ . \qquad \quad
  \end{multline*}
  If there is an index~$i$ such that $r_{i} \leq q$ then $\ell =
  0$.
  Otherwise, the groups $G_j$ are proper subgroups of~$\bZ^{s*}$ so
  that $\bigcup_{j=1}^{\ell} G_{j}^{t} \neq \hom(\bZ^{s}, \bZ^{t})$,
  and moreover $\ell > 0$ if the set of $(r_{i}-q)$-minors of~$A_{i}$
  is contained in the augmentation ideal~$I_{s}$ of~$R[\bZ^{s}]$ for
  all~$i$. \qed
\end{theorem}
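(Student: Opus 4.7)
The plan is to reduce the rank inequality to a statement about $\cK$-modules, and then invoke Corollary~\ref{cor:finite_modules}. The argument splits into a degenerate case and a main case.

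First, if there exists some index~$i$ with $r_i \leq q$, then $\rank_{\cK}(p_{*}(A_i)) < r_i - q$ would force the $\cK$-rank to be strictly negative, which is impossible. So no $p$ satisfies the conjunction, and taking $\ell = 0$ (empty union) renders both sides false for every~$p$.

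In the main case, all $r_i > q$, so each $r_i - q \geq 1$. By Definition~\ref{def:X-rank} combined with Lemma~\ref{lem:meaningful}, the condition $\rank_{\cK}(p_{*}(A_i)) < r_i - q$ is equivalent to the set of $(r_i - q)$-minors of~$p_{*}(A_i)$ being a $\cK$-set. Because $p_*$ is a ring homomorphism applied entry-wise, this set equals $p_{*}$ applied to the set of $(r_i - q)$-minors of~$A_i$. Let $M_i$ denote the $R$-submodule of~$R[\bZ^{s}]$ generated by those minors; the condition on~$p$ then reads: \emph{$p_{*}(M_i)$ is a $\cK$-module for every~$i$}. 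Since $r_i - q \leq r_i = \rank_{\cK}(A_i)$, the contrapositive of Lemma~\ref{lem:meaningful} shows the $(r_i - q)$-minors of~$A_i$ do not form a $\cK$-set, so $M_i$ is itself \emph{not} a $\cK$-module.

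Applying Corollary~\ref{cor:finite_modules} to $M_1, \ldots, M_k$ produces partition subgroups $G_1, \ldots, G_{\ell} \subseteq \bZ^{s*}$ (direct summands by Lemma~\ref{lem:summand}) such that the simultaneous $\cK$-module condition holds iff $p \in \bigcup_{j=1}^{\ell} G_{j}^{t}$. Because at least one (in fact every) $M_i$ fails to be a $\cK$-module, the corollary guarantees the $G_j$ are \emph{proper} subgroups. The criterion ``$\ell > 0$ when all $(r_i - q)$-minors lie in~$I_s$'' follows from the corollary's characterisation of $\ell > 0$ as the condition that every $0_{*}(M_i)$ be a $\cK$-module, since $M_i \subseteq I_{s}$ forces $0_{*}(M_i) = \{0\}$. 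There is no genuine obstacle remaining: the main technical content was packaged into Corollary~\ref{cor:finite_modules}, and the present argument is simply the bookkeeping translation between minors of matrices and $R$-submodules of~$R[\bZ^{s}]$.
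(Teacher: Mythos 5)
Your proposal is correct and follows essentially the same argument the paper gives: dispose of the degenerate case $r_i \leq q$, use Lemma~\ref{lem:meaningful} and the definition of $\cK$-rank to translate the rank inequality into the statement that each $p_{*}(M_i)$ is a $\cK$-module (where $M_i$ is generated by the $(r_i - q)$-minors of~$A_i$), and then invoke Corollary~\ref{cor:finite_modules}. You even spell out explicitly why $M_i$ fails to be a $\cK$-module and how the properness and $\ell > 0$ conclusions fall out of the corollary, which the paper treats more tersely.
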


\section{\cK-Betti numbers and their jump loci}
\label{sec:ck-betti-numbers}

Let $C$ be a chain complex (possibly unbounded) consisting of finitely
generated free based $R[\bZ^{s}]$-modules; more precisely, we suppose
that $C_k = (R[\bZ^{s}])^{r_k}$ for certain integers $r_k \geq 0$. We
call $r_k$ the rank of~$C_k$; if $R$ is unital, the (commutative) ring
$R[\bZ^{s}]$ has IBN and thus the isomorphism type of~$C_k$
determines~$r_k$ uniquely.

Our differentials lower the degree, $d_{k} \colon C_{k} \rTo C_{k-1}$,
and we assume that each map $d_k$ is given by multiplication by a
matrix~$D_k$ with entries in~$R[\bZ^{s}]$. (For non-unital~$R$ this
potentially restricts the set of allowed differentials.) We define the
\cK-rank of the homomorphism $d_k \colon C_{k} \rTo C_{k-1}$, denoted
$\rank_{\cK} (d_{k})$, to be the \cK-rank $\rank_{\cK}(D_{k})$ of the
matrix~$D_k$, as defined previously.

\begin{definition}
  The $k$th \cK-\textsc{Betti} number $b_{k}^{\cK} = b_{k}^{\cK}(C)$
  of~$C$ is
  \begin{displaymath}
    b_{k}^{\cK} (C) = r_k - \rank_{\cK} (d_{k}) - \rank_{\cK} (d_{k+1}) \
    .
  \end{displaymath}
\end{definition}

Given a homomorphism $p \in \hom(\bZ^{s}, \bZ^{t})$, we define a new
chain complex $p_{!}(C)$ by setting $p_{!}(C)_k = (R[\bZ^{t}])^{r_k}$,
equipped with differentials denoted $p_{!}(d_k)$ given by the matrices
$p_{*}(D_k)$. In case the multiplication map
$R[\bZ^{s}] \tensor_{R[\bZ^{s}]} R[\bZ^{t}] \rTo R[\bZ^{t}]$,
$x \tensor y \mapsto p_{*}(x) \cdot y$ is an isomorphism (this
happens, for example, if $R$ is unital), we have
$p_{!}(C) = C \tensor_{R[\bZ^{s}]} R[\bZ^{t}]$.  --- In general we
have $b_{k}^{\cK} \big( p_{!}(C) \big) \geq b_{k}^{\cK} (C)$ by
Proposition~\ref{prop:rank_drops}, with strict inequality if and only
if at least one of the strict inequalities
$\rank_{\cK} \big(p_{!}(d_{k}) \big) < \rank_{\cK} (d_{k})$ and
$\rank_{\cK} \big(p_{!}(d_{k+1}) \big) < \rank_{\cK} (d_{k+1})$ is
satisfied.

More generally, given $q \geq 0$ we want to characterise those
$p \in \hom(\bZ^{s}, \bZ^{t})$ such that
\begin{equation}
\label{eq:inc_Betti}
  b_{k}^{\cK} \big( p_{!}(C) \big) > b_{k}^{\cK} (C) + q \ .
\end{equation}
This happens if and only if applying~$p_{!}$ lowers the \cK-rank
of~$d_{k}$ by at least $q+1-j$, and lowers the \cK-rank of~$d_{k+1}$ by
at least $j$, for some $j$ in the range $0 \leq j \leq q+1$. Stated
more formally:

\begin{lemma}
  The inequality~\eqref{eq:inc_Betti} holds if and only if there
  exists a number~$j$ with $0 \leq j \leq q+1$ such that
  \begin{subequations}
    \begin{gather}
      \rank_{\cK} \big( p_{!}(d_{k}) \big) \leq \rank_{\cK} (d_{k}) -
      (q+1-j) \label{eq:a} \\
      \noalign{\noindent and} \rank_{\cK} \big( p_{!}(d_{k+1}) \big)
      \leq \rank_{\cK} (d_{k+1}) -j \ . \label{eq:b}
    \end{gather}
  \end{subequations}
\end{lemma}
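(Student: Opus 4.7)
The plan is to reduce the lemma to a purely arithmetic statement about how much the two $\cK$-ranks drop under $p_*$. I would set
\[
a = \rank_{\cK}(d_{k}) - \rank_{\cK}\big(p_{*}(d_{k})\big) \quad \text{and} \quad b = \rank_{\cK}(d_{k+1}) - \rank_{\cK}\big(p_{*}(d_{k+1})\big) \, ,
\]
observing that $a, b \geq 0$ by Proposition~\ref{prop:rank_drops}. Writing out the definition of $b_{k}^{\cK}$ for both $C$ and $p_{*}(C)$, the ranks $r_{k}$ of the free modules cancel and one sees at once that
\[
b_{k}^{\cK}\big(p_{*}(C)\big) - b_{k}^{\cK}(C) = a + b \, .
\]
Hence the inequality \eqref{eq:inc_Betti} is equivalent to the single arithmetic condition $a + b \geq q+1$.

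Next I would rephrase inequalities \eqref{eq:a} and \eqref{eq:b} in terms of $a$ and $b$: they read $a \geq q+1-j$ and $b \geq j$, for some integer $j$ with $0 \leq j \leq q+1$. Thus the lemma amounts to the equivalence
\[
a + b \geq q+1 \quad \Longleftrightarrow \quad \exists\, j \in \{0, 1, \ldots, q+1\} : \ a \geq q+1-j \ \text{and} \ b \geq j \, .
\]

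The reverse (``$\Leftarrow$'') direction is trivial: add the two inequalities. For the forward (``$\Rightarrow$'') direction I would take $j = \min(b,\, q+1)$, which obviously lies in $\{0, 1, \ldots, q+1\}$ and satisfies $b \geq j$. If $b \geq q+1$ then $j = q+1$ and $q+1-j = 0 \leq a$; otherwise $j = b$, so $q+1-j = q+1-b \leq a$ by the hypothesis $a+b \geq q+1$. In both cases the required pair of inequalities holds.

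There is essentially no hard step here; the content of the lemma is just the bookkeeping identity $b_{k}^{\cK}(p_{*}(C)) - b_{k}^{\cK}(C) = a+b$ together with the elementary observation above. The only point requiring a moment's care is ensuring that the constructed $j$ stays inside the allowed range $\{0, 1, \ldots, q+1\}$, which is why the truncation $\min(b, q+1)$ appears.
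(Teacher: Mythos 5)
Your proof is correct and follows essentially the same route as the paper's: the paper also, in effect, singles out the maximal $m \leq q+1$ with $\rank_{\cK}(p_{*}(d_{k+1})) \leq \rank_{\cK}(d_{k+1}) - m$, which is exactly your $j = \min(b, q+1)$, though it phrases the forward implication as a contrapositive and carries the full rank expressions through the computation. Your abstraction to the two drop-amounts $a$ and $b$ and the identity $b_{k}^{\cK}(p_{*}(C)) - b_{k}^{\cK}(C) = a + b$ is the same bookkeeping, just more transparently packaged.
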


\begin{proof}
  If such $j$ exists, then we have indeed
  \begin{align*}
    b_{k}^{\cK} \big( p_{!}(C) \big) &= r_{k} - \rank_{\cK} \big(
                                      p_{!}(d_{k}) \big) - \rank_{\cK}
                                      \big( p_{!}(d_{k+1}) \big)\\ 
    & \geq r_{k} - \big( \rank_{\cK} (d_{k}) - (q+1-j) \big) - \big(
      \rank_{\cK} (d_{k+1}) -j \big) \\  
    & = r_{k} - \rank_{\cK} (d_{k}) - \rank_{\cK} (d_{k+1}) + q+1 \\
    & > b_{k}^{\cK}(C) + q \ .
  \end{align*}
  For the converse, suppose that for each~$j$ at least one of the
  inequalities \eqref{eq:a} and~\eqref{eq:b} is violated. Inequality
  \eqref{eq:b} holds for $j=0$, by Proposition~\ref{prop:rank_drops}; let
  $m \leq q+1$ be maximal such that \eqref{eq:b} is true for
  $0 \leq j \leq m$. We must have $m \leq q$; otherwise \eqref{eq:a}
  must be violated for $j=m=q+1$, resulting in the inequality
  $\rank_{\cK} \big( p_{!}(d_{k}) \big) > \rank_{\cK} (d_{k})$ which is
  known to be nonsense, by Proposition~\ref{prop:rank_drops} again.

  We have established that \eqref{eq:b} holds for $j=m \leq q$ but is
  violated for $j=m+1$, that is, we know
  \begin{gather*}
    \rank_{\cK} \big( p_{!}(d_{k+1}) \big) \leq \rank_{\cK} (d_{k+1}) -m
    \\ %
    \noalign{\noindent and} %
    \rank_{\cK} \big( p_{!}(d_{k+1}) \big) >  \rank_{\cK} (d_{k+1}) -
    (m+1)
  \end{gather*}
  which yields the equality
  \begin{displaymath}
    \rank_{\cK} \big( p_{!}(d_{k+1}) \big) = \rank_{\cK} (d_{k+1}) -m \
    .
  \end{displaymath}
  As \eqref{eq:b} holds for $j=m$ we know that \eqref{eq:a} must be
  false for $j=m$. This, together with the previous equality, provides
  the estimate
  \begin{align*}
    b_{k}^{\cK} \big( p_{!}(C) \big) &= r_{k} - \rank_{\cK} \big(
                                      p_{!}(d_{k}) \big) - \rank_{\cK}
                                      \big( p_{!}(d_{k+1}) \big)\\
    & < r_{k} - \big( \rank_{\cK} (d_{k}) - (q+1-m) \big) -
      \big(\rank_{\cK} (d_{k+1}) -m \big) \\
    & = r_{k} - \rank_{\cK} (d_{k}) - \rank_{\cK} (d_{k+1}) + q+1 \\
    & = b_{k}^{\cK} (C) + q + 1\ ,
  \end{align*}
  whence $b_{k}^{\cK} \big( p_{!}(C) \big) \leq b_{k}^{\cK} (C) + q$
  so that \eqref{eq:inc_Betti} does not hold, as required.
\end{proof}

We can now apply Theorem~\ref{thm:rank_matrix} for each {\it
  fixed\/}~$j$ in the range $0 \leq j \leq q+1$ to the two
matrices~$D_{k}$ and~$D_{k+1}$, yielding a family of direct summands
of~$\bZ^{s*}$ characterising for which $p \in \hom(\bZ^{s}, \bZ^{t})$ the
inequalities~\eqref{eq:a} and~\eqref{eq:b} hold for our given choice
of~$j$. To characterise for which~$p$ inequality~\eqref{eq:inc_Betti}
holds, we allow $j$ to vary and take the union of all the groups~$G_j$
occurring. Collecting the information then results in the following:

\begin{theorem}
  \label{thm:jump_Betti}
  Let $C$ be a (not necessarily bounded) chain complex of finitely
  generated based free $R[\bZ^{s}]$\nbd-modules, with differentials
  given by matrices over~$R[\bZ^{s}]$. Let $q \geq 0$ and $k \in
  \bZ$.
  There are a number $\ell \geq 0$ and direct summands
  $G_{1},\, G_{2},\, \cdots,\, G_{\ell}$ of~$\bZ^{s*}$ such that for
  $p \in \hom(\bZ^{s}, \bZ^{t})$,
  \begin{displaymath}
    b_{k}^{\cK} \big( p_{!}(C) \big) > b_{k}^{\cK}(C) + q %
    \qquad \Longleftrightarrow \qquad %
    p \in \bigcup_{j=1}^{\ell} G_{j}^{t} \ . 
    \tag*{\qedsymbol}
  \end{displaymath}
\end{theorem}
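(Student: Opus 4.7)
The plan is to translate the \cK-Betti-number jump condition into finitely many matrix-rank-drop conditions, and then invoke the jump-locus result for matrix ranks established in the previous section.

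The preceding Lemma does the key reformulation: $b_k^{\cK}\bigl(p_*(C)\bigr) > b_k^{\cK}(C) + q$ if and only if there exists some $j \in \{0, 1, \ldots, q+1\}$ for which the inequalities~\eqref{eq:a} and~\eqref{eq:b} simultaneously hold. Consequently, the jump locus decomposes as a finite union, indexed by $j$ in this range, of the loci on which both inequalities are satisfied for that particular~$j$.

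For each fixed~$j$, I would characterise the corresponding locus by appealing to Corollary~\ref{cor:finite_modules}. Explicitly, let $M_j^{(1)}$ be the $R$-submodule of $R[\bZ^{s}]$ generated by the $\bigl(\rank_{\cK}(d_k) - q + j\bigr)$-minors of~$D_k$, and let $M_j^{(2)}$ be the $R$-submodule generated by the $\bigl(\rank_{\cK}(d_{k+1}) - j + 1\bigr)$-minors of~$D_{k+1}$. The definition of \cK-rank, combined with Lemma~\ref{lem:meaningful}, shows that the conjunction of~\eqref{eq:a} and~\eqref{eq:b} is equivalent to requiring that both $p_*(M_j^{(1)})$ and $p_*(M_j^{(2)})$ be \cK-modules. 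Corollary~\ref{cor:finite_modules} then yields a finite family of partition subgroups $G_{j,i} \subseteq \bZ^{s*}$, each a direct summand by Lemma~\ref{lem:summand}, such that this simultaneous condition is equivalent to $p \in \bigcup_i G_{j,i}^{t}$. This is essentially the content of Theorem~\ref{thm:rank_matrix}, except that there the two matrices are required to drop in rank by the same amount; however, its proof proceeds by applying Corollary~\ref{cor:finite_modules} to the submodules generated by minors of the appropriate sizes, so the argument goes through unchanged with different drop amounts for~$D_k$ and~$D_{k+1}$.

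Finally, I would take the union over $j \in \{0, 1, \ldots, q+1\}$ of all these finite families and reindex the result as $G_1, \ldots, G_\ell$, producing the desired characterisation. The main (and quite minor) obstacle is bookkeeping for boundary values of~$j$: when the required rank drop exceeds the current \cK-rank of the matrix, \eqref{eq:a} or~\eqref{eq:b} becomes unsatisfiable and the corresponding~$j$ contributes the empty locus and can be dropped; when the requested minor size is zero or negative, the condition is vacuous and one simply includes~$\bZ^{s*}$ itself in the family (which is permissible since the statement allows arbitrary, not necessarily proper, direct summands). Apart from this routine index housekeeping, the argument is a direct assembly of the preceding Lemma with Corollary~\ref{cor:finite_modules}.
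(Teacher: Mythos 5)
Your proof is correct and takes essentially the same route as the paper: the preceding Lemma decomposes the Betti-number jump condition into finitely many pairs of rank-drop inequalities indexed by $j$, and each pair is then characterised by a finite family of partition subgroups via Corollary~\ref{cor:finite_modules}, the union over $j$ giving the advertised $G_1, \ldots, G_\ell$. You are in fact slightly more careful than the paper's sketch: the paper says simply to ``apply Theorem~\ref{thm:rank_matrix}'' to the pair $(D_k, D_{k+1})$ for each fixed $j$, but as you correctly note that theorem as stated asks for a uniform rank drop $q+1$ across all matrices, whereas \eqref{eq:a} and \eqref{eq:b} demand drops of $q+1-j$ and $j$ respectively; your appeal directly to Corollary~\ref{cor:finite_modules}, with the submodules generated by minors of the two independently chosen sizes $\rank_{\cK}(d_k) - q + j$ and $\rank_{\cK}(d_{k+1}) - j + 1$, is the more faithful reading of what the argument actually uses, and your boundary-case bookkeeping (empty locus when a required drop exceeds the relevant $\cK$-rank, full group when a condition is vacuous) is also right.
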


\subsection*{Empty jump locus}

We finish the paper with a curious observation on the minors of
differentials in certain chain complexes of free modules. We keep the
notation from above: $R$ is a commutative ring, $\cK$ a hereditary set
of ideals, and $C$ a (possibly unbounded) chain complex consisting of
finitely generated free based $R[\bZ^{s}]$-modules. As before we
denote the $k$th differential by~$d_{k}$, and insist that $d_k$ is
given by multiplication by a matrix~$D_{k}$. In
Theorem~\ref{thm:jump_Betti} we consider $q=0$ and a fixed
$k \in \bZ$, and assume that $\ell = 0$ so the jump locus is the empty
set. This means that for all $p \in \hom(\bZ^{s}, \bZ^{t})$ the
induced complex $p_{!}(C)$ has the same $k$th \cK-\textsc{Betti}
number as~$C$, that is,
$b_{k}^{\cK} \big( p_{!} (C) \big) = b_{k}^{\cK} (C)$.

By definition of \textsc{Betti} numbers the equality
$b_{k}^{\cK} \big( p_{!} (C) \big) = b_{k}^{\cK} (C)$ necessitates
that $\rank_{\cK} (d_{i}) = \rank_{\cK} \big( p_{!}(d_{i}) \big)$ for
$i = k,k+1$. Write $r = \rank_{\cK} (d_{i})$. In case $r>0$ we let
$j \leq r$ be a positive integer. The set of $j$-minors of~$D_{i}$ is
not a \cK-set, by definition of the \cK-rank and
Lemma~\ref{lem:meaningful}. It follows that the set of $j$-minors of
$p_{*}(D_{i})$ is not a \cK-set either. (Indeed, if it was a \cK-set
then
$\rank_{\cK} \big( p_{!} (C) \big) < j \leq r = \rank_{\cK} (d_{i})$
so that $b_{i}^{\cK} \big( p_{!} (C) \big) > b_{i}^{\cK} (C)$ contrary
to our hypothesis.) In particular, not all $j$-minors of~$D_{i}$ can
be contained in the augmentation ideal of~$R[\bZ^{s}]$ as otherwise
the $j$-minors of $0_{*} (D_{i})$ would all be trivial, and would thus
form a \cK-set. We have shown:

\begin{proposition}
  Suppose that $b_{k}^{\cK} \big( p_{!} (C) \big) = b_{k}^{\cK} (C)$
  for all $p \in \hom(\bZ^{s}, \bZ^{t})$. Let $i = k,k+1$. For every
  positive integer $j \leq \rank_{\cK} (d_{i})$ at least one $j$-minor
  of~$D_{i}$ is not contained in the augmentation ideal
  of~$R[\bZ^{s}]$. \qed
\end{proposition}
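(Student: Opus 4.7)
The plan is to argue by contradiction, specializing to the zero homomorphism. My first step is to rewrite the hypothesis on \cK-Betti numbers as a rank equality. Proposition~\ref{prop:rank_drops} gives the a~priori inequality $\rank_{\cK} \big( p_{*}(d_{j}) \big) \leq \rank_{\cK}(d_{j})$ for all indices~$j$ and all $p$, so any strict drop in either $\rank_{\cK} \big( p_{*}(d_{k}) \big)$ or $\rank_{\cK} \big( p_{*}(d_{k+1}) \big)$ would strictly enlarge
\[
b_{k}^{\cK} \big( p_{*}(C) \big) = r_{k} - \rank_{\cK} \big( p_{*}(d_{k}) \big) - \rank_{\cK} \big( p_{*}(d_{k+1}) \big).
\]
Hence the assumed equality of Betti numbers for all~$p$ forces $\rank_{\cK} \big( p_{*}(d_{i}) \big) = \rank_{\cK}(d_{i})$ for $i = k, k+1$ and every $p \in \hom(\bZ^{s}, \bZ^{t})$.

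For the second step I fix $i \in \{k, k+1\}$, set $r = \rank_{\cK}(d_{i})$, and pick a positive integer $j \leq r$. By definition of \cK-rank the set of $r$-minors of $D_{i}$ is not a \cK-set, and iterating Lemma~\ref{lem:meaningful} in the contrapositive direction shows that the set of $j$-minors of $D_{i}$ is not a \cK-set either.

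The main step is the contradiction. Suppose to the contrary that every $j$-minor of $D_{i}$ lies in the augmentation ideal $I_{s} = \ker(0_{*})$ of~$R[\bZ^{s}]$. Because $p_{*}$ commutes with taking minors (the observation $p_{*}(z) = z'$ recorded at the start of \S\ref{sec:K-rank}), specializing to $p = 0$ shows that the $j$-minors of $0_{*}(D_{i})$ are all zero, and so form a \cK-set by Lemma~\ref{lem:augmentation}. This gives $\rank_{\cK} \big( 0_{*}(d_{i}) \big) < j \leq r = \rank_{\cK}(d_{i})$, contradicting the rank equality from the first step.

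The whole argument is essentially a chase through the definitions, and I do not anticipate any genuine obstacle. The only point worth pausing on is the compatibility of $p_{*}$ with taking minors, since this is what lets the augmentation-ideal hypothesis combine with the specialization $p = 0$ to produce a \cK-set of $j$-minors; everything else is already available in Proposition~\ref{prop:rank_drops} and Lemma~\ref{lem:meaningful}.
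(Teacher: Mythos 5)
Your proof is correct and follows essentially the same route as the paper: derive the rank equality $\rank_{\cK}(p_{*}(d_{i})) = \rank_{\cK}(d_{i})$ from the Betti-number hypothesis, use Lemma~\ref{lem:meaningful} to see the $j$-minors of $D_{i}$ do not form a $\cK$-set, and then specialize to $p=0$ to get a contradiction from the augmentation-ideal assumption. The paper phrases the intermediate step slightly more generally (noting the $j$-minors of $p_{*}(D_{i})$ are not a $\cK$-set for any $p$) before specializing to $p=0$, but the substance is identical.
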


The Proposition applies to the following special case: $R$ a unital
integral domain, $\cK = \cK_{0}$ and $C$ a contractible complex. For
then $p_{!}(C)$ is contractible as well since tensor products preserve
homotopies. It follows that $C$ and~$p_{!}(C)$ are acyclic, for
any~$p$; as $R$ is an integral domain, the (usual) \textsc{Betti}
numbers, corresponding to the specific choice of $\cK_{0}$ as
hereditary set of ideals, can be computed as the rank (in the usual
sense) of the homology modules, which are all trivial. That is, both
$C$ and~$p_{!}(C)$ have vanishing $k$th \textsc{Betti} number for all
$k \in \bZ$. As $\rank_{\cK_{0}} (d_{k}) > 0$ is equivalent to
$d_{k} \neq 0$, we conclude:

\begin{corollary}
  Suppose that $R$ is a unital integral domain, and that $C$ is a
  contractible chain complex of finitely generated free
  $R[\bZ^{s}]$-modules. For every non-zero differential~$d_{k}$
  of~$C$, and every positive $j \leq \rank(d_{k})$, at least one
  $j$-minor of its representing matrix~$D_{k}$ is not contained in the
  augmentation ideal of~$R[\bZ^{s}]$. \qed
\end{corollary}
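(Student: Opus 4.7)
The plan is to verify the hypothesis of the preceding Proposition with $\cK = \cK_{0}$ and then invoke it at each integer~$k$. First I would observe that since tensoring with any $R[\bZ^{s}]$-bimodule preserves chain homotopies, the induced complex $p_{*}(C) = C \tensor_{R[\bZ^{s}]} R[\bZ^{t}]$ is contractible for every $p \in \hom(\bZ^{s}, \bZ^{t})$; in particular both $C$ and $p_{*}(C)$ are acyclic.

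Next I would identify the $\cK_{0}$-\textsc{Betti} numbers with the usual \textsc{Betti} numbers computed over the field of fractions. Since $R$ is a unital integral domain, the group ring $R[\bZ^{s}]$ is a domain as well (being a localisation of the polynomial ring $R[\bN^{s}]$, which is a domain as $R$ is). For any matrix $A$ over an integral domain $D$ with fraction field $K$, the $\cK_{0}$-rank of $A$ equals the largest~$k$ such that some $k$-minor of $A$ is non-zero, which is precisely the rank of $A$ viewed as a matrix over $K$. Combining this with flatness of $K$ over $D$ yields $b_{k}^{\cK_{0}}(C) = \dim_{K} H_{k}(C \tensor_{R[\bZ^{s}]} K)$, and acyclicity of $C$ forces all these dimensions to vanish. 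The analogous argument over the fraction field of $R[\bZ^{t}]$ gives $b_{k}^{\cK_{0}}(p_{*}(C)) = 0$ for every~$k$ and every~$p$. Consequently $b_{k}^{\cK_{0}}(p_{*}(C)) = 0 = b_{k}^{\cK_{0}}(C)$, which is exactly the hypothesis of the preceding Proposition.

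I would then invoke that Proposition for each $k \in \bZ$, obtaining that for every $i \in \bZ$ and every positive $j \leq \rank_{\cK_{0}}(d_{i})$ at least one $j$-minor of $D_{i}$ does not lie in the augmentation ideal of $R[\bZ^{s}]$. A final observation closes the argument: since we work over a domain, $\rank_{\cK_{0}}(d_{k}) > 0$ is equivalent to $d_{k} \neq 0$, and the unadorned ``$\rank$'' in the statement of the Corollary is naturally interpreted as $\rank_{\cK_{0}}$. Restricting to non-zero differentials yields the stated conclusion.

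The step that I expect to require most care is the identification $b_{k}^{\cK_{0}}(C) = \dim_{K} H_{k}(C \tensor_{R[\bZ^{s}]} K)$, since this is precisely where the assumption that $R$ (and hence $R[\bZ^{s}]$) is an integral domain genuinely enters, and where one must match two {\it a priori\/} distinct notions of \textsc{Betti} number. Once that identification is in hand, the rest reduces to a direct application of the preceding Proposition together with the trivial remark that the $\cK_{0}$-rank of a matrix over a domain vanishes exactly when the matrix itself does.
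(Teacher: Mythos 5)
Your proof is correct and follows essentially the same route as the paper: contractibility of $C$ propagates to $p_{*}(C)$, acyclicity over a domain forces all $\cK_{0}$\nbd-\textsc{Betti} numbers of both complexes to vanish, and the preceding Proposition then applies. You have simply spelled out in more detail the step the paper glosses as ``the \textsc{Betti} numbers can be computed as the rank of the homology modules'' by passing to the fraction field and using flatness, which is a legitimate and welcome clarification rather than a departure.
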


\raggedright

\end{document}